\def\NZQ{\mathbb}               
\def\NN{{\NZQ N}}
\def\CC{{\NZQ C}}
\def\CC{{\mathcal C}}
\let\union=\cup
\let\ol=\overline
\let\wt=\widetilde
\def\opn#1#2{\def#1{\operatorname{#2}}} 
\opn\chara{char} \opn\length{\ell} \opn\pd{pd} \opn\rk{rk}
\opn\projdim{proj\,dim} \opn\injdim{inj\,dim} \opn\rank{rank}
\opn\depth{depth} \opn\codepth{codepth} \opn\grade{grade}
\opn\height{height} \opn\embdim{emb\,dim} \opn\codim{codim}
\opn\Tr{Tr} \opn\bigrank{big\,rank}
\opn\superheight{superheight}\opn\lcm{lcm}
\opn\trdeg{tr\,deg}%
\opn\reg{reg} \opn\lreg{lreg} \opn\skel{skel} \opn\Gr{Gr}
\opn\dim{dim} \opn\arithdeg{arithdeg}
\opn\dev{dev}
\opn\Gin{Gin}
\def\CC{{\mathcal C}}
\def\MM{{\mathcal M}}
\opn\inii{in} \opn\inim{inm} \opn\rate{rate}
\newtheorem{theorem}{Theorem}[section]
\newtheorem{lemma}[theorem]{Lemma}
\newtheorem{corollary}[theorem]{Corollary}
\newtheorem{proposition}[theorem]{Proposition}
\newtheorem{remark}[theorem]{Remark}
\newtheorem{example}[theorem]{Example}
\newtheorem{definition}[theorem]{Definition}
\begin{document}
\markboth{Giancarlo Rinaldo}
{Cohen--Macaulay binomial edge ideals}

\title{COHEN--MACAULAY BINOMIAL EDGE IDEALS OF CACTUS GRAPHS
}


\author{Giancarlo Rinaldo}
\address{Department of Mathematics\\
University of Trento\\
via Sommarive, 14\\
38123 Povo (Trento), Italy
}

\maketitle

\begin{abstract}
We classify the Cohen-Macaulay binomial edge ideals of cactus and bicyclic graphs.
\end{abstract}


\section*{Introduction}
In 2010, binomial edge ideals were introduced in \cite{HH} and appeared independently also in \cite{MO}. Let $S = K[x_1,\ldots, x_n, y_1,\ldots, y_n]$ be the polynomial ring in $2n$ variables with coefficients in a field $K$. Let $G$ be a graph on vertex set $[n]$ and edges $E(G)$. For each  $\{i,j\}\in E(G)$ with $i < j$, we associate a binomial $f_{ij} = x_iy_j - x_jy_i$. The ideal $J(G)$ of $S$ generated by $f_{ij} = x_iy_j - x_jy_i$ such that $i<j$ , is called \textit{the binomial edge ideal} of $G$. Any ideal generated  by a set of $2$-minors  of a $2\times n$-matrix of indeterminates may be viewed as the binomial edge ideal of a graph. 


Algebraic properties of binomial edge ideals in terms of properties of the underlying graph were studied in \cite{HH}, \cite{HEH} and \cite{RR}. In \cite{HEH} and \cite{RR} the authors considered the Cohen-Macaulay property of these graphs. Recently nice results on Cohen-Macaulay bipartite graphs and blocks have been obtained (see \cite{KM}, \cite{BMS} and \cite{BN}). 

However, the classification of Cohen-Macaulay binomial edge ideals in terms of the underlying graphs is still widely open and, as in the case of monomial edge ideals introduced in \cite{Vi2}, it seems rather hopeless to give a full classification.

The aim of this paper is to extend the results of \cite{Ri} where we classify Cohen-Macaulay and unmixed binomial edge ideals $J(G)$ with deviation, namely the difference between the minimum number of the generators and the height of $J(G)$, less than or equal to $1$. This invariant has an interesting combinatorial interpretation: if $J(G)$ is unmixed than its deviation is $|E(G)|-n + c$ where $c$ is the number of connected components of $G$ (see Remark 2.1 of \cite{Ri}). Hence deviation represents the minimum number of edges that must be removed from the graph to break all its cycles making it into a forest (see Chapter 4 of \cite{Ha}).

In section \ref{sec:Cactus} we give a classification Cohen-Macaulay and unmixed binomial edge ideals $J(G)$ when $G$ is a cactus graph, a graph whose blocks are cycles. This is  a natural extension to the result obtained in \cite{Ri} and useful to study binomial edge ideals by the deviation invariant. 

In section \ref{sec:Bicycle} as an application of the results obtained in section \ref{sec:Cactus} we classify the Cohen-Macaulay and unmixed binomial edge ideals $J(G)$ when $G$ is a bicyclic graph, that is the case of deviation $2$.

\section{Preliminaries}\label{sec:pre}
In this section we recall some concepts and notation on graphs and on simplicial
complexes that we will use in the article.

Let $G$ be a simple graph with vertex set $V(G)$ and edge set $E(G)$. A subset $C$ of $V(G)$ is called a \textit{clique} of $G$ if for all $i$ and $j$ belonging to $C$ with $i \neq j$ one has $\{i, j\} \in E(G)$. A vertex of a graph is called a \textit{cutpoint} if the removal of the vertex increases the number of connected components. A connected subgraph of $G$ that has no cutpoint and is maximal with respect to this property is a \textit{block}. A block graph $B(G)$ is a graph whose vertices are the blocks of $G$ and two vertices are adjacent whenever the corresponding blocks contain a common cutpoint of $G$. A connected graph is a \textit{cactus} if its blocks are cycles or edges.

Set $V = \{x_1, \ldots, x_n\}$. A \textit{simplicial complex} $\Delta$ on the vertex set $V$ is a collection of subsets of $V$ such that: $(i)$ $\{x_i\} \in \Delta$  for all $x_i \in V$; $(ii)$ $F \in \Delta$ and $G\subseteq F$ imply $G \in \Delta$.
An element $F \in \Delta$ is called a \textit{face} of $\Delta$. A maximal face of $\Delta$  with respect to inclusion is called a \textit{facet} of $\Delta$.
A vertex $i$ of $\Delta$ is called a free vertex of $\Delta$ if $i$ belongs to exactly one facet.
The \textit{clique complex} $\Delta(G)$ of $G$ is the simplicial complex whose faces are the cliques of $G$. Hence a vertex $v$ of a graph $G$ is called \textit{free vertex} if it belongs to only one clique of $\Delta(G)$.

We need notation and results  from \cite{HH} (section 3) that we recall for the sake of completeness.
Let $T\subseteq [n]$, and let $\ol{T}=[n]\setminus T$. Let $G_1,\ldots,G_{c(T)}$ be the connected components of the induced subgraph on $\ol{T}$, namely $G_{\ol{T}}$. For each $G_i$, denote by $\wt{G}_i$ the complete graph on the vertex set $V(G_i)$. We set 
\begin{equation}\label{eq:prime}
P_T(G)=(\bigcup_{i\in T}\{x_i,y_i\},J(\wt{G}_1),\ldots,J(\wt{G}_{c(T)}) ), 
\end{equation}
$P_T(G)$ is a prime ideal. Then $J(G)$ is a radical ideal and 
\[
J(G)=\bigcap_{T\subset [n]}P_T(G)
\]
is its primary decomposition (see Corollary 2.2 and Theorem 3.2 of \cite{HH}). If there is no possible confusion, we write simply $P_T$ instead of $P_T(G).$ Moreover, $\height P_T=n+|T|-c(T)$ (see \cite[Lemma 3.1]{HH}).
 We denote by $\MM(G)$ the set of minimal prime ideals of $J(G)$.

If each $i\in T$ is a cutpoint of the graph $G_{\ol{T}\cup \{i\}}$, then we say that $T$ is a {\em cutset for $G$.} We denote by $\mathcal{C}(G)$ the set of all cutsets for $G$.
%
\begin{lemma}\label{lem:cutpoint}\cite{HH}
 $P_T(G)\in \MM(G)$ if and only if $T\in\mathcal{C}(G)$.
\end{lemma}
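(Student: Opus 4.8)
The plan is to read off $\MM(G)$ from the inclusion order among the prime components $P_{T'}(G)$: since the primary decomposition in \eqref{eq:prime} exhibits every associated prime as some $P_{T'}$, the set $\MM(G)$ consists exactly of those $P_T$ that are minimal with respect to inclusion. So the whole lemma will follow once I have a usable criterion for when $P_{T'}\subseteq P_T$, and I match that criterion against the cutset condition.

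First I would record that $S/P_T$ is an integral domain. Killing the linear forms $x_s,y_s$ ($s\in T$) leaves the ideal $J(\wt G_1)+\cdots+J(\wt G_{c(T)})$ in pairwise disjoint sets of variables, so $S/P_T$ is, up to the eliminated variables, the tensor product over $K$ of the domains $K[x_j,y_j:j\in V(G_\ell)]/J(\wt G_\ell)$, hence a domain. This yields two facts. (a) For $i\in\ol T$ the image of $x_i$ is a nonzero element, so $x_i\in P_T$ (equivalently $y_i\in P_T$) if and only if $i\in T$. (b) For $a,b\in\ol T$ the binomial $x_ay_b-x_by_a$ lies in $P_T$ precisely when $a$ and $b$ lie in the same connected component of $G_{\ol T}$: if they do, it is a generator of the corresponding $J(\wt G_\ell)$; if not, their images occupy different tensor factors and the binomial is nonzero. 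From (a) and (b) I extract the containment criterion: $P_{T'}\subseteq P_T$ holds if and only if $T'\subseteq T$ and, for every connected component $C$ of $G_{\ol{T'}}$, all vertices of $C$ that lie in $\ol T$ belong to a single connected component of $G_{\ol T}$.

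For the forward implication I argue by contraposition. If some $i\in T$ is not a cutpoint of $G_{\ol T\cup\{i\}}$, then removing $i$ does not disconnect the component of $G_{\ol T\cup\{i\}}$ containing it, so that component meets $\ol T$ inside one component of $G_{\ol T}$; the criterion then gives $P_{T\setminus\{i\}}\subseteq P_T$, and since $x_i$ lies in $P_T$ but not in $P_{T\setminus\{i\}}$ the inclusion is strict, whence $P_T\notin\MM(G)$. For the converse, let $T\in\mathcal{C}(G)$ and suppose $P_{T'}\subseteq P_T$. The criterion forces $T'\subseteq T$, so it remains to exclude $T'\subsetneq T$. Choosing $i\in T\setminus T'$, which is a cutpoint of $G_{\ol T\cup\{i\}}$, the component $H$ of $G_{\ol T\cup\{i\}}$ containing $i$ breaks into at least two components of $G_{\ol T}$ upon deleting $i$; picking vertices $a,b$ in two distinct such components, the edges joining $i$ to each component (edges of $G_{\ol T\cup\{i\}}$, hence of $G_{\ol{T'}}$ since $\ol T\cup\{i\}\subseteq\ol{T'}$) place $a$, $i$, $b$ in one component of $G_{\ol{T'}}$. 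As $a,b\in\ol T$ sit in different components of $G_{\ol T}$, this violates the criterion, a contradiction. Hence $T'=T$ and $P_T\in\MM(G)$.

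The delicate points, which I would treat most carefully, are fact (b) — the nonvanishing of the cross binomial $x_ay_b-x_by_a$ in the tensor product, since both directions rest on it — and the converse implication, where the purely combinatorial cutpoint hypothesis must be converted, for \emph{every} proper subset $T'\subsetneq T$, into a failure of the containment criterion via the two-vertices-in-different-components construction. The forward direction and the reduction $T'\subseteq T$ are then routine consequences of (a).
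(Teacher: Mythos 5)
The paper offers no proof of this lemma to compare against: it is stated with a citation to \cite{HH} (it is essentially Corollary 3.9 there), so your argument can only be judged on its own terms and against the original source. Judged so, your proof is correct, and it is in spirit the argument of \cite{HH}: since $J(G)$ is a finite intersection of the primes $P_{T'}(G)$, the minimal primes of $J(G)$ are precisely the inclusion-minimal members of this family; your membership facts (a) and (b) yield the right criterion for $P_{T'}\subseteq P_T$ (note they also show $T$ is recoverable from $P_T$, so equality of ideals forces equality of cutsets); and the two implications translate inclusion-minimality into the cutset condition correctly. In the forward direction the case where $i$ is isolated in $G_{\ol{T}\cup\{i\}}$ (removal of $i$ \emph{decreases} the number of components) is covered vacuously by your criterion, and in the converse the key point is, as you say, that the component $H$ of $G_{\ol{T}\cup\{i\}}$ containing $i$ remains inside a single component of the larger induced subgraph $G_{\ol{T'}}$, so the two vertices $a,b$ of $H$ lying in different components of $G_{\ol{T}}$ violate the criterion. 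One small caveat: your justification of primality of $P_T$ via ``a tensor product of domains over $K$ is a domain'' is not valid over an arbitrary field (e.g. $\mathbb{C}\otimes_{\mathbb{R}}\mathbb{C}$ is not a domain); it is true here because determinantal rings are geometrically integral, but you can avoid the issue entirely by quoting the primality of $P_T(G)$, which the paper (following \cite{HH}) already asserts, and by observing that facts (a) and (b) themselves need only the linear independence of the variables in the separate tensor factors, not integrality.
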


\begin{lemma}\label{lem:unmixwd}\cite{RR} Let $G$ be a connected graph. Then $J(G)$ is unmixed if and only if for all $T\in\mathcal{C}(G)$ we have $c(T)=|T|+1$.
\end{lemma}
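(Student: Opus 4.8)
The plan is to reduce the claim to the height formula $\height P_T(G) = n + |T| - c(T)$ recalled above, using Lemma~\ref{lem:cutpoint} to identify the minimal primes of $J(G)$ with the cutsets of $G$, and using the radicality of $J(G)$ to test unmixedness on minimal primes alone. Recall that $J(G)$ is radical by Corollary~2.2 of \cite{HH}, hence has no embedded primes; so its associated primes coincide with its minimal primes, which by Lemma~\ref{lem:cutpoint} are precisely the $P_T(G)$ with $T \in \mathcal{C}(G)$. Thus $J(G)$ is unmixed if and only if all of these primes have the same height.

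The next step is to single out a distinguished minimal prime against which the others can be measured. Since the condition defining a cutset is vacuous for $T = \emptyset$, we have $\emptyset \in \mathcal{C}(G)$; and because $G$ is connected, $c(\emptyset) = 1$. Hence $P_\emptyset(G)$ --- the ideal of $2$-minors of the generic $2 \times n$ matrix --- is a minimal prime of $J(G)$ of height $n + 0 - 1 = n - 1$. This is the common value that unmixedness will pin down.

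For the forward implication I would argue: if $J(G)$ is unmixed, then every minimal prime has height $n - 1$, the height of $P_\emptyset(G)$; applying this to $P_T(G)$ for an arbitrary $T \in \mathcal{C}(G)$ yields $n + |T| - c(T) = n - 1$, that is, $c(T) = |T| + 1$. Conversely, if $c(T) = |T| + 1$ for every cutset $T$, then $\height P_T(G) = n + |T| - (|T| + 1) = n - 1$ for every such $T$, so all minimal --- hence all associated --- primes of $J(G)$ have height $n - 1$, and $J(G)$ is unmixed.

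There is no serious obstacle here: once Lemma~\ref{lem:cutpoint} and the height formula are in hand, the argument is essentially bookkeeping, the key observation being that $P_\emptyset(G)$ always supplies a minimal prime of height $n-1$. The only subtlety worth flagging is the step that permits detecting unmixedness by inspecting only minimal primes, which rests on $J(G)$ having no embedded components, i.e.\ on its radicality.
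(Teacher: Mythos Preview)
Your argument is correct. Note, however, that the paper does not supply its own proof of this lemma: it is stated with a citation to \cite{RR} and left unproved, so there is no in-paper proof to compare against. That said, your approach is exactly the standard one underlying the cited result: identify the minimal primes via Lemma~\ref{lem:cutpoint}, invoke radicality to rule out embedded primes, and use the height formula $\height P_T = n + |T| - c(T)$ together with the baseline $T=\emptyset$ (for which $c(\emptyset)=1$ since $G$ is connected) to see that unmixedness is equivalent to $c(T)=|T|+1$ for every cutset $T$.
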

The following observation gives motivation to consider the block graphs in this context.
\begin{proposition}\label{tree}
 Let $J(G)$ be unmixed. Then the block graph $B(G)$ is a tree.
\end{proposition}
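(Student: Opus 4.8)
The plan is to extract a rigid local condition on cutpoints from the unmixedness hypothesis, and then recognise the tree property as a restatement of that condition. Since the block graph of a disconnected graph is itself disconnected, we assume $G$ is connected (as is needed already for Lemma \ref{lem:unmixwd}, which we shall use); then $B(G)$ is connected, so it suffices to show that $B(G)$ has no cycle.

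The key step is to prove that every cutpoint $v$ of $G$ lies in exactly two blocks. Given a cutpoint $v$, the singleton $T=\{v\}$ is a cutset: here $G_{\ol T\cup\{v\}}=G$ and $v$ is a cutpoint of $G$, so $T\in\mathcal{C}(G)$. Because $J(G)$ is unmixed, Lemma \ref{lem:unmixwd} yields $c(T)=|T|+1=2$, that is, $G-v$ has exactly two connected components. It then remains to match this number with the number of blocks through $v$, using the standard graph-theoretic fact that the connected components of $G-v$ correspond bijectively to the blocks of $G$ containing $v$: if $C$ is a component of $G-v$ then $v$ is not a cutpoint of $G[C\cup\{v\}]$ and hence lies in a unique block of it, while conversely the vertex set of any block through $v$, with $v$ removed, is connected in $G-v$ and so lies inside a single component. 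Thus $v$ lies in exactly two blocks of $G$.

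For the final step I would pass to the block--cut tree $\TT$ of $G$, the bipartite graph whose vertices are the blocks and the cutpoints of $G$ and in which a block $B$ and a cutpoint $c$ are joined whenever $c\in B$; this is a tree for every connected graph. By the key step every cutpoint is a degree-two vertex of $\TT$, and $B(G)$ is precisely the graph obtained from $\TT$ by suppressing all of these degree-two vertices: two blocks are adjacent in $B(G)$ exactly when they share a cutpoint, equivalently when they are the two $\TT$-neighbours of that cutpoint, and no multiple edges can arise because two distinct blocks meet in at most one vertex. Suppressing degree-two vertices of a tree gives a tree, so $B(G)$ is a tree. (One could equally argue by contradiction: a cycle in $B(G)$ together with the acyclicity of $\TT$ would force some cutpoint into three or more blocks, contradicting the key step.)

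I expect all of the real content to sit in the key step --- in particular in observing that the singleton $\{v\}$ is a legitimate cutset, so that Lemma \ref{lem:unmixwd} may be applied with $|T|=1$, and in identifying the invariant $c(\{v\})$ with the number of blocks at $v$. The remaining ingredients --- the block--cut tree being a tree, two blocks meeting in at most one vertex, and $B(G)$ being connected when $G$ is --- are routine, well-known facts that carry no difficulty.
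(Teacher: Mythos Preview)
Your proof is correct and follows essentially the same route as the paper: both arguments observe that for each cutpoint $v$ the singleton $\{v\}$ is a cutset, apply Lemma~\ref{lem:unmixwd} to get $c(\{v\})=2$, and conclude that $v$ lies in exactly two blocks. The only cosmetic difference is in the final graph-theoretic wrap-up: the paper quotes Harary's theorem that every block of $B(G)$ is complete (so the two-block condition forces every block of $B(G)$ to be an edge), whereas you pass through the block--cut tree and suppress degree-two vertices; these are equivalent standard facts.
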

\begin{proof}
By Theorem 3.5 of \cite{Ha} each block of the block graph is a complete graph. We recall that two vertices in $B(G)$ are adjacent when the corresponding blocks contain a common cutpoint of $G$. Moreover for each cutpoint $v$, $\{v\}\in \CC(G)$. By Lemma \ref{lem:unmixwd} the assertion follows.
\end{proof}
We observe the following (see also  Remark 3.1.(i) of \cite{BMS})
\begin{proposition}\label{Pro:cutset}
 Let $v$ be a vertex of $G$ with neighbor set $N(v) = \{u \in V(G)\mid \{u,v\}\in E(G)\}$ and let $G_{\ol{v}}$ the graph such that $V(G_{\ol{v}})=V(G)$ and  $E(G_{\ol{v}})=E(G)\cup \{\{u_1,u_2\} \mid u_1\neq u_2, u_1,u_2\in N(v)\}$.
 Then 
 \[
  \CC(G_{\ol{v}})=\{T\in \mathcal C(G)\mid v\not\in T\}.
 \]
\end{proposition}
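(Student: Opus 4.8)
The plan is to reduce the statement to a local graph-theoretic fact describing what happens when one turns the neighbourhood of a single vertex into a clique. For an arbitrary graph $H$ with $v\in V(H)$, write $H^{+}$ for the graph on the same vertex set obtained by adding all edges $\{u_1,u_2\}$ with $u_1\ne u_2$ and $u_1,u_2\in N_H(v)$. First I would record the elementary identity $(G_{\ol v})_W=(G_W)^{+}$, valid for every $W\subseteq[n]$ with $v\in W$: indeed $N_{G_W}(v)=N_G(v)\cap W$, and passing to the induced subgraph on $W$ commutes with adding the edges among $N_G(v)$. Granting this, the whole proposition will follow from two claims about the passage from $H$ to $H^{+}$: \textbf{(a)} $v$ is never a cutpoint of $H^{+}$; and \textbf{(b)} for every vertex $i\ne v$ of $H$, the vertex $i$ is a cutpoint of $H$ if and only if it is a cutpoint of $H^{+}$.

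For (a): in $H^{+}$ the neighbours of $v$ are pairwise adjacent, so any simple path through $v$ meets $v$ in exactly two of its neighbours and can be rerouted through the edge joining them; hence deleting $v$ leaves the rest of its connected component connected, and after also treating the trivial cases where $v$ is isolated or is a leaf of $H^{+}$, one sees that deleting $v$ never increases the number of connected components. For (b): since $v$ is adjacent to every vertex of $N_H(v)$, the set $\{v\}\cup N_H(v)$ lies inside a single connected component of $H$, so all edges added in forming $H^{+}$ stay within that component and $H$, $H^{+}$ have the same number of connected components; applying the same remark to $H-i$, in which $\{v\}\cup(N_H(v)\setminus\{i\})$ is still joined through $v$ because $i\ne v$, shows that $H-i$ and $H^{+}-i$ also have the same number of connected components. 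Comparing these two equalities gives (b).

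To conclude, suppose $T\in\CC(G_{\ol v})$. Then $v\notin T$, for otherwise $v$ would be forced to be a cutpoint of $(G_{\ol v})_{\ol T\cup\{v\}}$, contradicting (a). Consequently $v\in\ol T\cup\{i\}$ for every $i\in T$, so $(G_{\ol v})_{\ol T\cup\{i\}}=(G_{\ol T\cup\{i\}})^{+}$ and (b) converts the cutpoint condition at $i$ into the corresponding one in $G$, whence $T\in\CC(G)$. Conversely, if $T\in\CC(G)$ with $v\notin T$, the same identity together with (b) shows that each $i\in T$ remains a cutpoint of $(G_{\ol v})_{\ol T\cup\{i\}}$, so $T\in\CC(G_{\ol v})$. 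I expect the only delicate point to be claim (b): since a cutpoint is defined by an \emph{increase} in the number of connected components of a graph that need not be connected, the safe route is to keep track of the component counts of $H$, $H^{+}$, $H-i$ and $H^{+}-i$ separately rather than reasoning about connectivity directly, and to dispose of the degenerate low-degree configurations of $v$ explicitly.
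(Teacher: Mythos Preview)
Your argument is correct and follows essentially the same idea as the paper's proof: both hinge on the observation that, as long as $v$ remains in the induced subgraph, adding the clique on $N(v)$ cannot change the connected-component structure, since all new edges join vertices already connected through $v$. The paper phrases this by fixing $S\subseteq V(G)$ with $v\notin S$ and showing directly that $G_{\ol S}$ and $(G_{\ol v})_{\ol S}$ have connected components with identical vertex sets, then invoking an external result (Proposition~1.1 of \cite{RR}) for the fact that the free vertex $v$ of $G_{\ol v}$ lies in no cutset; you instead package the same content into the two local claims (a) and (b) about the operation $H\mapsto H^{+}$ and prove the free-vertex fact yourself as~(a), which makes the argument self-contained. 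One small remark: the paper's formulation actually records the slightly stronger statement that the \emph{vertex sets} of the components agree (not merely their number), and this is what is used in the subsequent corollary to identify $Q_{\ol v}$ with $J(G_{\ol v})$; your claim~(b) as stated only tracks component counts, so if you later need that stronger conclusion you would want to note it separately.
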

\begin{proof}
If $v$ is a free vertex then $E(G)=E(G_{\ol{v}})$. In fact by definition there is only one clique containing $v$, that is all vertices adjacent to $v$ are adjacent to each other. Hence we assume $v$ is not a free vertex in $G$.  By definition of the graph $G_{\ol{v}}$ $v$ is a free vertex of $G_{\ol{v}}$. This implies by Proposition 1.1  of \cite{RR} that there are no cutsets of $G_{\ol{v}}$ containing $v$. 

Let $S\subset V(G)$ such that $v\notin S$, and let
 \[
 G_1,\ldots, G_{c_1}
\]
be the connected components of the graph $G_{\ol{S}}$, with $v\in V(G_1)$. Each $\{v,u\}\in E(G)$ either intersects $S$, hence is not in $E(G_{\ol{S}})$, or is an edge of $G_1$ since it is connected through $v$ in $G_1$. In the same way, let $H=G_{\ol{v}}$ and let
 \[
 H_1,\ldots, H_{c_2}
\]
be the connected components of the graph $H_{\ol{S}}$, with $v\in V(H_1)$. Each $\{v,u\}\in E(H)$ and each $\{u_1,u_2\}\in E(H)$ with $u_1$, $u_2\in N(v)$ either intersects $S$ or is an edge of $H_1$. This implies that $H_i=G_i$ for $i=2,\ldots,c_1$ and $c_1=c_2$. Moreover
\begin{equation}\label{eq:samevertices}
 V(G_i)=V(H_i),\hspace{1 cm}\forall i=1,\ldots,c_1.
\end{equation}
In particular this is true for all the cutsets. That is for all $T\in \CC(G)$ then  $i\in T$ is a cutpoint of the graph $G_{\ol{T}\cup \{i\}}$, and the number of connected components decreases when $S=T\setminus \{i\}$. The same happens for $H$, hence $T\in \CC(H)$. The same argument works in the other direction.
\end{proof}

\begin{corollary}\label{Lem:cutset}
 Let $v$ be a vertex of $G$ that is not free vertex and let $G_{\ol{v}}$ the graph such that $V(G_{\ol{v}})=V(G)$ and  $E(G_{\ol{v}})=E(G)\cup \{\{u_1,u_2\} \mid u_1\neq u_2, u_1,u_2\in N(v)\}$. Then 
\[
   J(G)=J(G_{\ol{v}})\cap Q_v
\]
with $Q_v = \bigcap_{T\in\CC(G), v\in T} P_T(G)$.
\end{corollary}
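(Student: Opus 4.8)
The plan is to derive this from Proposition~\ref{Pro:cutset} together with the primary decomposition of $J(G)$ recalled in Section~\ref{sec:pre}, treating it essentially as bookkeeping. First I would record that, since $J(G)$ is radical with minimal primes exactly $\{P_T(G)\mid T\in\CC(G)\}$ (combine $J(G)=\bigcap_{T\subseteq[n]}P_T(G)$ with Lemma~\ref{lem:cutpoint}), one has $J(G)=\bigcap_{T\in\CC(G)}P_T(G)$. Then I would split the index set $\CC(G)$ according to whether a cutset contains $v$, writing
\[
J(G)=\Bigl(\bigcap_{T\in\CC(G),\ v\notin T}P_T(G)\Bigr)\cap\Bigl(\bigcap_{T\in\CC(G),\ v\in T}P_T(G)\Bigr).
\]
The second factor is $Q_v$ by definition, so everything reduces to identifying the first factor with $J(G_{\ol v})$.

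For that, I would invoke Proposition~\ref{Pro:cutset}, which already gives $\CC(G_{\ol v})=\{T\in\CC(G)\mid v\notin T\}$, so the two index sets agree. The remaining point is that $P_T(G)=P_T(G_{\ol v})$ as ideals of $S$ for every cutset $T$ with $v\notin T$. Here I would use that $P_T(H)$ depends only on $T$ and on the vertex sets of the connected components of $H_{\ol T}$ — the complete graphs $\wt{G}_i$ in \eqref{eq:prime} are determined by those vertex sets — so it is enough to check that $G_{\ol T}$ and $(G_{\ol v})_{\ol T}$ have the same connected components as vertex sets. But this is exactly \eqref{eq:samevertices}, which was proved inside Proposition~\ref{Pro:cutset} for any subset $S$ with $v\notin S$; applying it with $S=T$ finishes this step. (Intuitively: the extra edges of $G_{\ol v}$ all join two neighbors of $v$; such an edge either meets $T$, and then disappears in $(G_{\ol v})_{\ol T}$, or avoids $T$, in which case its endpoints already lie in the component of $v$ in $G_{\ol T}$, so the partition into components is unaffected.)

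Finally I would reassemble:
\[
\bigcap_{T\in\CC(G),\ v\notin T}P_T(G)=\bigcap_{T\in\CC(G_{\ol v})}P_T(G_{\ol v})=J(G_{\ol v}),
\]
the last equality being the radical primary decomposition applied to $G_{\ol v}$, whence $J(G)=J(G_{\ol v})\cap Q_v$. The one place where any real content enters is the equality $P_T(G)=P_T(G_{\ol v})$ for $v\notin T$, and I expect this to be the main obstacle only insofar as it requires carefully matching component vertex sets; since this has effectively been done already in Proposition~\ref{Pro:cutset}, the corollary should follow quickly. I would also note in passing that the hypothesis that $v$ is not a free vertex is not needed for the displayed identity (if $v$ is free then $G_{\ol v}=G$, no cutset contains $v$, and $Q_v$ is the unit ideal); it is there to make the decomposition non-trivial.
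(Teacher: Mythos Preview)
Your proposal is correct and follows essentially the same route as the paper's own proof: split the primary decomposition of $J(G)$ over $\CC(G)$ according to whether $v\in T$, use Proposition~\ref{Pro:cutset} to identify the index sets, and then invoke \eqref{eq:samevertices} to conclude $P_T(G)=P_T(G_{\ol v})$ for $v\notin T$, whence $Q_{\ol v}=J(G_{\ol v})$. Your remark that the non-free-vertex hypothesis is inessential to the displayed identity is a harmless extra observation.
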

\begin{proof}
Let $v\in V(G)$ then $J(G)=Q_{\ol{v}}\cap Q_v$ with 
\[
Q_v = \bigcap_{T\in\CC(G), v\in T} P_T(G),\hspace{1cm}Q_{\ol{v}} = \bigcap_{T\in\CC(G), v\notin T} P_T(G). 
\]
Applying Proposition \ref{Pro:cutset}, we have that the cutsets in $\CC(G_{\ol{v}})$ are exactly the cutsets in $\CC(G)$ not containing $v$. Moreover the connected components induced by any $T$ in $G$ and in $G_{\ol{v}}$ have the same set of vertices as stated in the proof of Proposition \ref{Pro:cutset} (see   \eqref{eq:samevertices}). Using the notation introduced in the mentioned proof $\wt{H}_i=\wt{G}_i$  for each connected component, that is $$J(G_{\ol{v}})=Q_{\ol{v}}.$$
\end{proof}

\begin{example}\label{Ex:C4DK4} Sometime the ideal $Q_v$ in Corollary \ref{Lem:cutset} has a natural interpretation. For example let $G=C_4$ with vertices $V(C_4)=\{1,\ldots,4\}$ and edges $\{\{1,2\},\{2,3\},\{3,4\},\{1,4\}\}$. Then we obtain
\begin{equation}\label{eq:C4toD}
 J(G)=J(G_{\bar{1}})\cap Q_1=J(D)\cap (x_1,y_1, x_3, y_3)
\end{equation}
where $D$, known as diamond graph, has $V(D)=V(C_4)$ and $E(D)=E(C_4)\cup \{\{2,4\}\}$ and the ring $S/Q_v$ is related to the isolated vertex $2$ and $4$. By similar argument we obtain  
\begin{equation}\label{eq:DtoK4}
 J(D)=J(D_{\bar{2}})\cap Q_2=J(K_4)\cap (x_2,y_2, x_4, y_4)
\end{equation}
where $K_4$ is the complete graph with $4$ vertices. If $G=C_5$ such an interpretation fails.  Let $V(C_5)=\{1,\ldots,5\}$ and edges  $\{\{1,2\},\{2,3\},\{3,4\},\{4,5\},\{1,5\}\}$. If we consider $J(G)=J(G_{\bar{1}})\cap Q_1$, then $G_{\bar{1}}$ is the graph obtained adding the edge $\{2,5\}$ to the cycle $C_5$, as expected by Corollary \ref{Lem:cutset}, but 
\[
 Q_1=(x_1,y_1, f_{2,3}, f_{4,5}, x_3 x_4, x_3 y_4, y_3 x_4, y_3 y_4).
\]
\end{example}
Passing from a block that is not complete to a complete one (see \eqref{eq:C4toD},\eqref{eq:DtoK4} in Example \ref{Ex:C4DK4} and Figure \ref{CM-C4toK4}), is useful for our aim thanks to the following nice result (see Theorem 1.1 of \cite{HEH}) that we state using our notation  
\begin{theorem}\label{Th:EHH}
Let $G$ be a graph whose blocks are complete graphs. The following conditions are equivalent:
\begin{enumerate}
 \item $J(G)$ is Cohen-Macaulay; 
 \item $J(G)$ is unmixed; 
 \item $B(G)$ is a tree. 
\end{enumerate}
\end{theorem}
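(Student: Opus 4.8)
The plan is to prove the cycle of equivalences $(1)\Rightarrow(2)\Rightarrow(3)\Rightarrow(1)$, following the structure of Theorem 1.1 of \cite{HEH} but re-deriving each implication in the language set up above. The implication $(1)\Rightarrow(2)$ is immediate: a Cohen--Macaulay ideal is always unmixed, since all its associated primes have the same height. For $(2)\Rightarrow(3)$ we do not need the hypothesis that the blocks are complete: by Proposition \ref{tree}, if $J(G)$ is unmixed then $B(G)$ is a tree. (One could also observe here that, since the blocks are complete graphs, $B(G)$ being a tree is equivalent to $G$ being a so-called \emph{block graph}.)

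The substance is in $(3)\Rightarrow(1)$, and I would argue by induction on the number of blocks of $G$. If $G$ has a single block, then $G$ is a complete graph and $J(G)$ is the ideal of $2$-minors of a generic $2\times n$ matrix, which is a classical determinantal ideal and hence Cohen--Macaulay. For the inductive step, pick a leaf block $B$ of the tree $B(G)$; it meets the rest of $G$ in a single cutpoint $v$. Write $G'$ for the union of all blocks of $G$ other than $B$, so $V(G)=V(G')\cup V(B)$ with $V(G')\cap V(B)=\{v\}$, and both $G'$ and $B$ have $B(\cdot)$ a tree with strictly fewer blocks, hence $J(G')$ and $J(B)$ are Cohen--Macaulay by induction (the base case covering $B$). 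The key is to relate $J(G)$ to $J(G')$ and $J(B)$ via a Mayer--Vietoris-type short exact sequence. Concretely, one shows
\[
 J(G) = J(G') + J(B)
\]
after a suitable relabelling, that $J(G')\cap J(B)$ (or rather $J(G')+J(B)+(\text{stuff})$) has a controlled form, and that the sum $J(G')+J(B)$ together with the ideal generated by the $x$-, $y$-variables on the cutpoint $v$ fits into an exact sequence whose outer terms have the right depth. This is essentially the standard gluing argument for determinantal-type ideals along a free vertex.

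The main obstacle will be the depth bookkeeping in that gluing step: one must verify that the intersection ideal appearing in the Mayer--Vietoris sequence is Cohen--Macaulay of the correct dimension, so that the depth lemma forces $S/J(G)$ to be Cohen--Macaulay. Here the hypothesis that every block is \emph{complete} is exactly what makes the localization at the cutpoint $v$ behave well: collapsing $v$ turns $G$ near $v$ into a disjoint union of complete graphs, so the relevant ideal $S/(J(G)+(x_v,y_v))$ decomposes as a tensor product of generic determinantal rings, whose Cohen--Macaulayness and dimension are known. Granting that, the dimension count matches $\height P_T(G) = n + |T| - c(T)$ with $|T|=1$, $c(T)=2$ for the cutset $T=\{v\}$, and the induction closes. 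Alternatively, if this direct approach proves cumbersome, one can invoke the result of \cite{HEH} verbatim and simply record that it is being restated in the present notation; but I would prefer the self-contained induction sketched above.
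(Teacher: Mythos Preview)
The paper does not actually prove this theorem: it is quoted verbatim from \cite{HEH} (Theorem~1.1 there) and used as a black box, so there is no in-paper proof to compare your proposal against.

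That said, your outline is sound, and $(1)\Rightarrow(2)$ and $(2)\Rightarrow(3)$ are exactly as you say. For $(3)\Rightarrow(1)$ your inductive strategy is correct, but within the apparatus already set up in this paper you are working harder than necessary. The key observation you are circling around is this: if $B(G)$ is a tree, then any cutpoint $v$ lies in \emph{exactly two} blocks (three or more blocks through $v$ would form a clique in $B(G)$, contradicting tree-ness). Hence when you peel off a leaf block $B$ and write $G=G'\cup B$ with $V(G')\cap V(B)=\{v\}$, the vertex $v$ sits in a single (complete) block on each side, so $v$ is a free vertex of both $\Delta(G')$ and $\Delta(B)$. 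This is precisely the decomposability of Definition~\ref{def:indecomposable}, and Lemma~\ref{The:free} then gives $J(G)$ Cohen--Macaulay from $J(G')$ and $J(B)$ Cohen--Macaulay, closing the induction immediately.

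Your hand-rolled Mayer--Vietoris/depth-lemma argument is essentially an attempt to reprove Lemma~\ref{The:free} inline, and as written it is too vague to stand on its own: you never settle on which short exact sequence you mean (you oscillate between $J(G')\cap J(B)$ and ``$J(G')+J(B)+(\text{stuff})$''), and the claim that $S/(J(G)+(x_v,y_v))$ splits as a tensor product of determinantal rings needs the free-vertex observation above to even be true. Once you have that observation, you may as well cite Lemma~\ref{The:free} and be done; if you insist on a self-contained argument, you should write down the sequence explicitly and verify the depth of each term, which is exactly what the proof of Lemma~\ref{The:free} in \cite{RR} does.
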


\section{Classification of Cohen-Macaulay cactus graphs}\label{sec:Cactus}
In this section we provide a classification of Cohen-Macaulay binomial edge ideal $J(G)$ when $G$ is a cactus graphs. 
Since a binomial edge ideal $J(G)$ is Cohen--Macaulay (resp. unmixed) if and only
if $J(H)$ is Cohen--Macaulay (resp. unmixed) for each connected component $H$ of $G$,
we assume from now on that the graph G is connected unless otherwise stated.

We start by the following
\begin{proposition}\label{the:C3AndC4Only}
 Let $J(G)$ be an unmixed binomial edge ideal. If a cycle $C_l$ is a block of $G$ then $l\in \{3,4\}$.
\end{proposition}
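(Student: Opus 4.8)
The plan is to show that if $C_l$ is a block of $G$ with $l \geq 5$, then $J(G)$ fails to be unmixed, by exhibiting a cutset $T$ for which $c(T) \neq |T|+1$, contradicting Lemma~\ref{lem:unmixwd}. First I would reduce to understanding cutsets that live inside the cycle: since $C_l$ is a block, its vertices that are cutpoints of $G$ are the ones attaching it to the rest of the graph, and any cutset $T$ of $G$ restricts to a set of vertices $T \cap V(C_l)$ whose behaviour with respect to connectivity is governed entirely by the cycle. The key combinatorial fact is that for the cycle $C_l$ on vertices $\{1,\ldots,l\}$, removing a single vertex leaves a path (connected), so a singleton inside the cycle is never a cutpoint there; to disconnect $C_l$ one must remove at least two vertices, and removing two nonadjacent vertices $i,j$ splits $C_l$ into exactly two paths. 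This is the mechanism that forces $l \in \{3,4\}$: in a longer cycle there is ``room'' to pick a cutset that behaves badly.

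The core step is the local computation at the cycle. Suppose $l \geq 5$. Pick two vertices $i,j \in V(C_l)$ at distance $2$ along the cycle, say $i$ and $j$ with a unique common neighbour $k$ between them on one arc and a longer arc on the other side. Consider $T$ obtained by taking $\{i,j\}$ together with, for each cutpoint $v$ of $G$ lying on $C_l$ outside $\{i,j,k\}$, nothing further — i.e. I want to isolate the single vertex $k$. After removing $\{i,j\}$, the vertex $k$ becomes an isolated component, while the long arc remains a single connected path (possibly carrying attached subgraphs of $G$, but those stay within one component). So $\{i,j\}$ is a cutset of $G$ when $i,j,k$ are not cutpoints of $G$ — which we may arrange, or else enlarge $T$ and track the count — and it produces $c(\{i,j\}) = 2$ components from $C_l$ (the isolated $k$ and the rest) \emph{plus} no extra splitting, giving $c(T) = |T|+1 = 3$ only in the borderline cases; for $l \geq 5$ one instead finds a cutset with $c(T) < |T|+1$. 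Concretely, for $l \geq 5$ choose $T$ to consist of two vertices at distance $2$ \emph{and} simultaneously check a third removal: the honest approach is to show $\{i,j\} \in \CC(G)$ with $i,j$ non-adjacent and non-consecutive-with-common-neighbour, so that $C_l \setminus \{i,j\}$ is two paths each of length $\geq 1$, neither being a single vertex, and then argue that adding back the structure forces $c(T) \ne |T|+1$ at some cutset obtained by a further removal inside one of those long paths.

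Rather than the above, the clean argument I would actually write: for $l \geq 5$, the cycle contains two vertices $i, j$ with $\{i,j\}\notin E(C_l)$ and such that $i,j$ do \emph{not} have a common neighbour in $C_l$ (possible precisely when $l \geq 5$). Then $C_l \setminus \{i,j\}$ consists of two paths each on at least two vertices. Take $T = \{i,j\}$ — after checking $i,j$ are cutpoints in the appropriate induced graphs, so $T\in\CC(G)$ — and observe $c(T) = 2$ contributions from the cycle side, but now pick instead $T' = \{i\}\cup\{$a suitable cutpoint$\}$; the decisive point is that one of the two arcs has interior vertices that are free in $G_{\overline{T}}$, and using Proposition~\ref{Pro:cutset} to fill in the neighbour edges at $i$ or $j$ one produces a cutset violating the unmixedness count. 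The main obstacle is exactly this bookkeeping: one must handle the possibility that various cycle vertices are themselves cutpoints of $G$ (attaching other blocks), and ensure the chosen $T$ is genuinely a cutset while controlling $c(T)$; I expect to dispatch this by working with $G_{\overline{v}}$ via Proposition~\ref{Pro:cutset} and Corollary~\ref{Lem:cutset} to strip away complete blocks, reducing to the pure cycle where the count $l \geq 5 \Rightarrow c(T) \ne |T|+1$ is a direct check.
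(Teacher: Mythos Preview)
Your proposal identifies the right target---produce a cutset $T\subseteq V(C_l)$ with $c(T)\neq |T|+1$---but never closes the argument, and the specific attempts you sketch do not work as stated. In your second paragraph you pick $i,j$ at distance $2$ with common neighbour $k$ and argue that $c(\{i,j\})=2$ \emph{provided $i,j$ are not cutpoints of $G$}; but you cannot simply assume this, and ``enlarge $T$ and track the count'' is not an argument. In your third paragraph you pick $i,j$ non-adjacent with no common neighbour on the cycle, claiming this is ``possible precisely when $l\geq 5$''---but for $l=5$ every non-adjacent pair is at distance $2$ and has a common neighbour, so this approach misses the case $l=5$ entirely. Finally, invoking Proposition~\ref{Pro:cutset} to ``strip away complete blocks'' does not reduce you to the bare cycle: that proposition adds edges among $N(v)$ inside $G$, it does not delete attached subgraphs, and the cutpoint pattern on $C_l$ survives.

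The idea you are missing is the one the paper exploits: rather than hunting for a single bad cutset, use unmixedness itself as a constraint on the cutpoint pattern along the cycle. For any non-adjacent pair $\{i_j,i_k\}\subset V(C_l)$ one has $\{i_j,i_k\}\in\CC(G)$, and the two arcs of $C_l\setminus\{i_j,i_k\}$ already contribute two components; hence $c(\{i_j,i_k\})=3$ forces \emph{exactly one} of $i_j,i_k$ to be a cutpoint of $G$. Chain this parity constraint across several overlapping pairs (e.g.\ $\{i_1,i_3\}$, $\{i_1,i_{l-1}\}$, $\{i_2,i_{l-1}\}$, $\{i_3,i_l\}$) to deduce that two specific non-adjacent vertices, say $i_2$ and $i_l$, are \emph{both} cutpoints; then $c(\{i_2,i_l\})=4$, contradicting unmixedness. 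This bookkeeping is exactly what your proposal flags as the ``main obstacle'' but leaves unresolved.
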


\begin{proof}
We begin observing by \cite{HEH} and \cite{Ri} that unmixed binomial edge ideals containing blocks that are cycles of length $3$ and $4$ exist. We  assume that exists a cycle $C_l$ in $G$ with $l\geq 5$ and $J(G)$ unmixed. Since $C_l$ is a block we represent $G$ as
\begin{equation}\label{eq:cactus}
  G=C_l\cup \left(\bigcup_{i=1}^r G_i\right) 
\end{equation}
where $r\geq 0$, $G_i$ are subgraphs of $G$ for all $i$,  $|V(G_i)\cap V(C_l)|=1$. Since $C_l$ is a block we have that $\{v\}=V(G_i)\cap V(C_l)$ is a cutpoint and all the paths between $u\in V(G_i)$ and $w\in V(G_j)$  with $i\neq j$ pass through $v$ (see Theorem 3.1 of \cite{Ha}).

Let $V(C_l)=\{i_1,i_2,\ldots,i_l\}$ such that $\{i_j,i_{j+1}\}$ is an edge of $G$ with $j=1,\ldots,l-1$ and $\{i_1,i_l\}$ is an edge of $G$, too.  We observe that if $T$ is $\{i_j,i_k\}$  such that $\{i_j,i_k\}\notin E(C_l)$  then $T\in \CC(G)$. Moreover $G_{\ol{T}}$ has at least two connected components induced by the remaining $l-2$ vertices of $C_l$. Assuming $G$ is unmixed there are exactly $3$ connected components in $G_{\ol{T}}$ and this implies that one of the vertices in $T$ is a cutpoint and the other is not a cutpoint. Thanks to this observation we easily obtain a contradiction. We give the proof for the sake of completeness. We observe that in the cutset $\{i_1,i_3\}$ we may assume without loss of generality that $i_1$ is the cutpoint and $i_3$ is not a cutpoint. By the same argument focusing on $\{i_1,i_{l-1}\}$, $i_{l-1}$ is not a cutpoint. If we consider $\{i_2,i_{l-1}\}$ and $\{i_3,i_{l}\}$ we obtain that $i_2$ and $i_l$ are cutpoints.  Let $T_1=\{i_2,i_l\}$ since $G_{\ol{T}_1}$ has $4$ connected components we obtain a contradiction. 
 \end{proof}

\begin{proposition}\label{Pro:C4}
 Let $J(G)$ be an unmixed binomial edge ideal. If a cycle $C_4$ is a block of $G$ then it satisfies the following $(C4)$-condition
\begin{itemize}
 \item there are exactly two cutpoints in $C_4$ and they are adjacent.
\end{itemize}
\end{proposition}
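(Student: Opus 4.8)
The plan is to turn the cutset-counting argument from the proof of Proposition~\ref{the:C3AndC4Only} into a quantitative one. Write $G = C_4 \cup \bigl(\bigcup_{i=1}^{r} G_i\bigr)$ exactly as in \eqref{eq:cactus}, with $V(C_4) = \{1,2,3,4\}$ and edges $\{1,2\},\{2,3\},\{3,4\},\{1,4\}$, so that the two non-edges inside the block are the diagonals $\{1,3\}$ and $\{2,4\}$; note $C_4\cong K_{2,2}$ with parts $\{1,3\}$ and $\{2,4\}$. Just as in that proof, since $C_4$ is a block of the cactus $G$, every path joining a vertex of the arc $1$--$4$ to a vertex of the arc $2$--$3$ runs through $1$ or through $3$; hence $1$ is a cutpoint of $G_{\ol{\{3\}}}$ and $3$ is a cutpoint of $G_{\ol{\{1\}}}$, i.e.\ $\{1,3\}\in\CC(G)$, and symmetrically $\{2,4\}\in\CC(G)$.

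The key point is the identity
\[
c(\{1,3\}) = c(\{1\}) + c(\{3\}), \qquad c(\{2,4\}) = c(\{2\}) + c(\{4\}).
\]
Indeed, deleting the vertices $1$ and $3$ breaks $C_4$ into its two arcs, which here are the isolated vertices $2$ and $4$; in $G_{\ol{\{1,3\}}}$ the component of $2$ consists of $2$ together with all $G_i$ attached at $2$, and likewise for $4$, while every $G_i$ attached at $1$ (resp.\ at $3$) splits off into exactly the same components it forms in $G_{\ol{\{1\}}}$ (resp.\ $G_{\ol{\{3\}}}$). Because distinct blocks of a cactus meet in at most one (cut)point, none of these pieces recombine, and counting yields $c(\{1,3\}) = 2 + (c(\{1\})-1) + (c(\{3\})-1)$. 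Recall also that $c(\{v\})\ge 1$ for every vertex $v$, with $c(\{v\})\ge 2$ precisely when $v$ is a cutpoint of $G$.

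Now apply Lemma~\ref{lem:unmixwd} to the cutsets $\{1,3\},\{2,4\}\in\CC(G)$: unmixedness forces $c(\{1,3\}) = c(\{2,4\}) = 3$, hence $c(\{1\})+c(\{3\}) = 3$ and $c(\{2\})+c(\{4\}) = 3$. In each pair one of the two values equals $2$ and the other equals $1$, so exactly one of $1,3$ and exactly one of $2,4$ is a cutpoint of $G$. Therefore $C_4$ has exactly two cutpoints, one in $\{1,3\}$ and one in $\{2,4\}$; since $C_4\cong K_{2,2}$ with these two parts, the two cutpoints are adjacent, which is the asserted $(C4)$-condition.

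I expect the only delicate step to be the identity $c(\{1,3\}) = c(\{1\})+c(\{3\})$: one must check with care that the two arcs of $C_4$ really become separate components of $G_{\ol{\{1,3\}}}$, that the subgraphs hanging at $1$ and at $3$ contribute exactly $c(\{1\})-1$ and $c(\{3\})-1$ components, and that nothing reconnects. This is precisely where the cactus hypothesis (via Theorem~3.1 of \cite{Ha}) has to be invoked rather than a picture; once it is established, the conclusion follows at once from Lemmas~\ref{lem:cutpoint} and~\ref{lem:unmixwd}.
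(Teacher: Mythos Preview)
Your argument is correct and rests on the same two cutsets $\{1,3\}$ and $\{2,4\}$ that the paper uses; the paper simply runs a small case analysis on $r$ and on which vertices are cutpoints, while you compress that analysis into the identity $c(\{1,3\})=c(\{1\})+c(\{3\})$ (and its companion for $\{2,4\}$) and read off the conclusion directly. One terminological correction: Proposition~\ref{Pro:C4} does not assume $G$ is a cactus, so there is no ``cactus hypothesis'' to invoke---the separation property you need (that distinct $G_i$'s cannot reconnect outside the block, so the pieces hanging at $1$ and at $3$ really contribute $c(\{1\})-1$ and $c(\{3\})-1$ components) follows purely from $C_4$ being a block of $G$, exactly as in the paper's use of Theorem~3.1 of \cite{Ha}.
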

\begin{proof}
By Proposition 2 of \cite{Ri} we know that a graph satisfying the thesis exists. We prove that all the other cases are not unmixed.
Let $V(C_4)=\{1,2,3,4\}$ and let $E(C_4)=\{\{1,2\},\{2,3\},\{3,4\},\{1,4\}\}$.  We assume the same representation of $G$ given in \eqref{eq:cactus}.
 Suppose that either $r=0$ or $r=1$ with cutpoint $1$. Then $c(\{2,4\})=2$. Hence it is not unmixed. Let $r \geq 2$ and assume that $1$ and $3$ are cutpoints then $c(\{1,3\})=4$. Hence $J(G)$ is not unmixed.
\end{proof}
\begin{definition}\label{def:indecomposable}
  A graph $G$ is decomposable (resp. indecomposable)  if exists (resp. does not exist) a decomposition
  \begin{equation}\label{eq:dec2}
  G=G_1\cup G_2 
  \end{equation}
  with $\{v\}=V(G_1)\cap V(G_2)$ such that $v$ is a free vertex of $\Delta(G_1)$ and $\Delta(G_2)$. By a recursive decomposition \eqref{eq:dec2} applied to each $G_1$ and $G_2$, after a finite number of steps we obtain
  \begin{equation}\label{eq:decr}
     G=G_1\cup \cdots \cup G_r
  \end{equation}
  where $G_1,\ldots,G_r$ are indecomposable and for $1\leq i<j\leq r$ either $V(G_i)\cap V(G_j) = \emptyset$ or $V(G_i)\cap V(G_j) =\{v\}$ and  $v$ is a free vertex of $\Delta(G_i)$ and $\Delta(G_j)$. The decomposition \eqref{eq:decr} is unique up to ordering and we say that $G$ is decomposable into indecomposable graphs $G_1,\ldots,G_r$.
\end{definition}

\begin{lemma}[See \cite{RR}]\label{The:free}
Let $G$ be a decomposable graph with decomposition $G=G_1\cup G_2$ and $V(G_1)\cap V(G_2)=\{v\}$. Then $J(G)$ is Cohen-Macaulay (resp. unmixed) if and only if $J(G_1)$ and $J(G_2)$ are Cohen-Macaulay (resp. unmixed).
\end{lemma}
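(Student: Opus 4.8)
The plan is to reduce the statement to the primary-decomposition description of $J(G)$ and show that the cutsets split cleanly along the free vertex $v$. The key point is that decomposability at a free vertex $v$ means every cutset of $G$ lies entirely in $G_1$ or entirely in $G_2$; there is no cutset that "straddles" the two pieces. This is essentially already in the literature, but let me sketch how I would organize it.

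First I would set up notation: write $G = G_1 \cup G_2$, $V(G_1) \cap V(G_2) = \{v\}$, with $v$ free in both $\Delta(G_1)$ and $\Delta(G_2)$. The crucial combinatorial lemma is that $\CC(G) = \{ T_1 \cup T_2 : T_1 \in \CC(G_1), T_2 \in \CC(G_2) \}$, where I allow $\emptyset \in \CC(G_i)$, and moreover $v$ belongs to no cutset of $G$ (this follows from Proposition 1.1 of \cite{RR} since $v$ is free in $G$). To prove this, observe that for any $T \subseteq V(G) \setminus \{v\}$, the induced subgraph $G_{\ol{T}}$ decomposes: its connected components are obtained by gluing the components of $(G_1)_{\ol{T \cap V(G_1)}}$ and $(G_2)_{\ol{T \cap V(G_2)}}$ along the single vertex $v$, which lies in exactly one component of each. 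Hence $c_G(T) = c_{G_1}(T\cap V(G_1)) + c_{G_2}(T \cap V(G_2)) - 1$, and a vertex $i \in T$ is a cutpoint of $G_{\ol{T}\cup\{i\}}$ if and only if it is a cutpoint of the corresponding $(G_k)_{\ol{(T\cap V(G_k))}\cup\{i\}}$ (the other piece is untouched and the gluing at $v$ does not create or destroy a separation at $i \neq v$). This gives the bijection $\CC(G) \leftrightarrow \CC(G_1) \times \CC(G_2)$.

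Next I would translate this into the primary decomposition. For $T = T_1 \cup T_2$ as above, the components of $G_{\ol{T}}$ are those of $(G_1)_{\ol{T_1}}$ and $(G_2)_{\ol{T_2}}$ with the two containing $v$ merged; passing to complete graphs on vertex sets and using that $v$ is free (so in $G_k$ the component containing $v$ is a clique meeting the rest only appropriately), one checks $P_T(G) = P_{T_1}(G_1) S + P_{T_2}(G_2) S$ inside $S$, and correspondingly the quotient $S/J(G)$ is (up to polynomial extension in the disjoint variables) a tensor product / fiber product governed by $S/J(G_1)$ and $S/J(G_2)$ glued along the variables $x_v, y_v$. Unmixedness of $J(G)$ is then immediate from Lemma \ref{lem:unmixwd} and the bijection: $c_G(T) = |T| + 1$ for all $T \in \CC(G)$ iff $c_{G_k}(T_k) = |T_k| + 1$ for all $T_k \in \CC(G_k)$, $k = 1, 2$ (using $c_G(T) = c_{G_1}(T_1) + c_{G_2}(T_2) - 1$ and $|T| = |T_1| + |T_2|$). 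For the Cohen--Macaulay statement I would invoke that $S/J(G)$ is, after localizing/identifying, the tensor product of $S_1/J(G_1)$ and $S_2/J(G_2)$ over $K[x_v, y_v]$ where $x_v, y_v$ form a regular sequence on each factor (the variables of $v$ are nonzerodivisors because $v$ is free, hence not in any minimal prime's "$x_i, y_i$ part"); Cohen--Macaulayness then passes both ways by the standard behavior of depth under such tensor products, exactly as in \cite{RR}.

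**The main obstacle** I expect is the bookkeeping in the algebraic half: making the identification $S/J(G) \cong (S_1/J(G_1)) \otimes_{K[x_v,y_v]} (S_2/J(G_2))$ precise, and verifying that $x_v, y_v$ is a regular sequence on each factor so that the depth formula $\depth = \depth_1 + \depth_2 - 2$ applies and matches dimensions. The combinatorial bijection of cutsets is routine once the component-counting identity $c_G(T) = c_{G_1}(T_1) + c_{G_2}(T_2) - 1$ is in hand; the subtlety is only that freeness of $v$ is exactly what guarantees no cutset uses $v$ and that the complete-graph replacements in \eqref{eq:prime} are compatible with the gluing. Since the paper attributes this lemma to \cite{RR}, I would keep the proof brief, citing \cite{RR} for the regular-sequence / tensor-product mechanics and spelling out only the cutset bijection and the unmixedness equivalence in detail.
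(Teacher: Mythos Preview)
The paper does not prove this lemma at all; it is quoted from \cite{RR}, and your sketch follows the argument found there: the bijection $\CC(G)\leftrightarrow\CC(G_1)\times\CC(G_2)$ together with the identity $c_G(T_1\cup T_2)=c_{G_1}(T_1)+c_{G_2}(T_2)-1$ settles unmixedness via Lemma~\ref{lem:unmixwd}, and a tensor-product identification over $K[x_v,y_v]$ handles depth.

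One correction is in order. The equality $P_T(G)=P_{T_1}(G_1)S+P_{T_2}(G_2)S$ that you assert is generally \emph{false}: the connected component of $G_{\ol T}$ through $v$ has vertex set $A\cup B$, where $A$ and $B$ are the components through $v$ in $(G_1)_{\ol{T_1}}$ and $(G_2)_{\ol{T_2}}$, and $J(K_{A\cup B})\supsetneq J(K_A)+J(K_B)$ whenever $|A|,|B|\geq 2$ (the cross-binomials $f_{ij}$ with $i\in A\setminus\{v\}$, $j\in B\setminus\{v\}$ are missing on the right). Fortunately this prime-by-prime identity is not needed anywhere in your argument. The tensor decomposition $S/J(G)\cong (S_1/J(G_1))\otimes_{K[x_v,y_v]}(S_2/J(G_2))$ follows directly from the elementary fact $J(G)=J(G_1)S+J(G_2)S$, which holds simply because $E(G)=E(G_1)\cup E(G_2)$; and your height computation for unmixedness uses only the cutset bijection and the component-count formula, not the primes themselves. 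So the overall strategy is correct once that detour is removed.
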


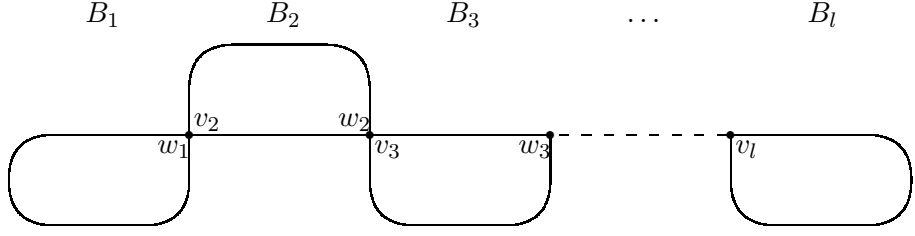
\begin{figure}
\begin{center}
\setlength{\unitlength}{.60cm}
\begin{picture}(20,5)

\put(01,02){\line(1,0){11}}
\multiput(11.2,02)(0.5,0){10}{\line(1,0){.2}}
\put(16,02){\line(1,0){3}}

\qbezier(00,01)(0,02)(01,02)
\qbezier(00,01)(0,00)(01,00)
\put(01,00){\line(1,0){2}}
\qbezier(03,00)(04,00)(04,01)

\put(04,01){\line(0,1){2}}
\qbezier(04,03)(04,04)(05,04)
\put(05,04){\line(1,0){2}}
\qbezier(07,04)(08,04)(08,03)
\put(08,03){\line(0,-1){2}}

\qbezier(08,01)(08,00)(09,00)
\put(09,00){\line(1,0){2}}
\qbezier(11,00)(12,00)(12,01)
\put(12,01){\line(0,1){1}}

\put(16,02){\line(0,-1){1}}
\qbezier(16,01)(16,00)(17,00)
\put(17,00){\line(1,0){2}}
\qbezier(19,00)(20,00)(20,01)
\qbezier(20,01)(20,02)(19,02)

\put(01.7,4.5){$B_1$}
\put(04,02){\circle*{.2}}
\put(03.3,01.6){$w_1$}

\put(05.7,4.5){$B_2$}
\put(08,02){\circle*{.2}}
\put(04.1,02.2){$v_2$}
\put(07.3,02.2){$w_2$}

\put(09.7,4.5){$B_3$}
\put(12,02){\circle*{.2}}
\put(08.1,01.6){$v_3$}
\put(11.3,01.6){$w_3$}

\put(13.7,4.5){$\ldots$} 

\put(17.7,4.5){$B_l$}
\put(16,02){\circle*{.2}}
\put(16.1,01.6){$v_l$}

\end{picture}
 
\end{center}
\caption{$B(G)$ is a path.}\label{BPath} 
\end{figure}
Our aim is to prove that every Cohen-Macaulay cactus graph $G$ is decomposable into indecomposable graphs $G_1,\ldots,G_r$ such that the block graph $B(G_i)$ is a path for $1\leq i\leq r$. Hence it is necessary as a first step to classify the indecomposable Cohen-Macaulay cactus graphs whose block graph is a path. To reach this goal we use from now on the following notation. Let $G$ be a graph such that $B(G)$ is a path defined on the following sets (see Figure \ref{BPath})
\begin{equation}\label{pathnotation}
\begin{array}{ll}
 \mbox{vertices of }  B(G) & \{ B_1,\ldots, B_l \} \\
 \mbox{edges of }     B(G) & \{ \{B_i,B_{i+1}\} \mid i=1,\ldots,l-1\} \\
 \mbox{cutpoints of }  G  & \{w_i=v_{i+1} \mid i=1,\ldots,l-1\}  
 \end{array}
\end{equation}
and such that $w_i=v_{i+1}\in V(B_i)\cap V(B_{i+1})$, with $i=1,\ldots,l-1$.
\begin{lemma}\label{Lem:pathgraph} Let $G$ be a graph such that $B(G)$ is a path. We use notation \eqref{pathnotation}. If $|V(B_i)|\geq 3$ for all $1<i<l$ then the power set of $\{w_1,\ldots,w_{l-1}\}$ is a subset of $\CC(G)$. 
\end{lemma}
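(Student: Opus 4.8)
The plan is to show that for every subset $T\subseteq \{w_1,\ldots,w_{l-1}\}$ each element of $T$ is a cutpoint of the induced graph $G_{\ol{T}\cup\{i\}}$, which is exactly the definition of $T\in\CC(G)$. Since $B(G)$ is a path, removing any collection of the cutpoints $w_i$ from $G$ visibly disconnects the graph into a chain of pieces coming from the blocks $B_1,\ldots,B_l$; the only subtlety is that removing $w_i$ might fail to increase the number of connected components if, after deleting the other vertices of $T$, the two ``sides'' of $w_i$ happen to already be disconnected from $w_i$ or if a block collapses to a single vertex. This is where the hypothesis $|V(B_i)|\geq 3$ for $1<i<l$ enters: it guarantees that each interior block, after deletion of the two cutpoints it contains, still has at least one vertex left, so that $w_i$ genuinely bridges something on its left and something on its right.

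First I would fix $T\subseteq\{w_1,\ldots,w_{l-1}\}$ and an index $i$ with $w_i\in T$, and set $S=T\setminus\{w_i\}$. I would describe explicitly the connected components of $G_{\ol{S}}$: writing $T$ in terms of the cutpoints it contains, the cutpoints partition the path $B_1-B_2-\cdots-B_l$ into consecutive runs of blocks, and $G_{\ol{S}}$ is the disjoint union over these runs of the induced subgraphs on the union of vertex sets of the blocks in each run (this uses the fact that the only shared vertices between consecutive blocks are the $w_i$, a property of $B(G)$ being a path). Now $w_i$ lies inside one such run; I would check that $w_i=v_{i+1}$ has neighbours in $B_i$ other than those lying in $B_{i+1}$, and neighbours in $B_{i+1}$ other than those in $B_i$. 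For $1<i$ this is immediate since $B_i$ is a block of size $\ge 3$ containing $w_i$; for the boundary cases $i=1$ (so $w_1\in B_1$) and $i=l-1$ (so $w_{l-1}=v_l\in B_l$) the block $B_1$, resp. $B_l$, is still a nontrivial block (it contains an edge through its cutpoint), so $w_i$ still has a private neighbour on that side. Hence deleting $w_i$ from the run containing it splits that run's component into (at least) two pieces — one meeting $B_i$ and one meeting $B_{i+1}$ — while leaving the other components of $G_{\ol{S}}$ untouched.

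Therefore $c(S\cup\{w_i\})=c(T)>c(S)$, so $w_i$ is a cutpoint of $G_{\ol{T}\cup\{w_i\}}=G_{\ol{S}}$; since $i$ was arbitrary in $T$, this shows $T\in\CC(G)$, and since $T$ was an arbitrary subset of $\{w_1,\ldots,w_{l-1}\}$ the whole power set is contained in $\CC(G)$. The main obstacle is the careful bookkeeping in the boundary cases $i\in\{1,l-1\}$, where the relevant block $B_1$ or $B_l$ is \emph{not} assumed to have $\ge 3$ vertices: one must argue that even a block of size $2$ (an edge) ending the path still provides the needed ``private'' neighbour of $w_i$ so that its removal creates a genuinely new component; this follows because a block is a maximal $2$-connected subgraph or an edge, and in either case the endpoint cutpoint has at least one neighbour inside that terminal block.
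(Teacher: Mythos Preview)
Your proposal is correct and follows essentially the same idea as the paper's proof: both hinge on the observation that the hypothesis $|V(B_i)|\ge 3$ for interior blocks guarantees that, after deleting any subset of the cutpoints, each ``side'' of a remaining cutpoint $w_i$ still contains at least one vertex, so $w_i$ separates its run. The only organisational difference is that the paper argues by induction on $|T|$, adding one cutpoint at a time and showing the new element is a cutpoint of $G_{\ol T}$, whereas you treat an arbitrary $T$ directly and verify the cutset condition for each $w_i\in T$; the underlying structural analysis of $G_{\ol{T\setminus\{w_i\}}}$ as a chain of runs is the same.
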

\begin{proof}
We use induction on $t$ the cardinality of $T\in\CC(G)$. If $t=1$ since each $w_i$ is a cutpoint of $G$ the claim  follows. Let $2\leq t < l-1$ and 
\[
 T=\{w_{i_1}, w_{i_2},\ldots w_{i_t}\} \mbox{ with }1\leq i_1<i_2<\cdots<i_t\leq l-1.
\]
Let $w_j\in \{w_1,\ldots,w_{l-1}\}\setminus T$ and assume without loss of generality that $j<i_1$ and $i_1\geq 2$. We observe that the graph $G_{\ol{T}}$ has the connected component $H=B_1\cup \ldots\cup B_{i_1-1}\cup B_{i_1}'$ where $B_{i_1}'$ is obtained removing the vertex $w_{i_1}$ from the block $B_{i_1}$. Since by hypothesis $B_{i_1}$ has more than two vertices, $B_{i_1}'$ contains at least one vertex, $v_{i_1}'$, that is not a cutpoint in $G$. Hence $H_{\ol{\{w_j\}}}$ has two connected components: one containing a vertex in $V(B_1)\setminus \{w_1\}$ and one containing the vertex $v_{i_1}'$. That is  $w_j$ is a cutpoint of $G_{\ol{T}}$ and $T\cup \{w_j\}\in \CC(G)$.
\end{proof}
By Proposition \ref{the:C3AndC4Only}, Example \ref{Ex:C4DK4} and Theorem \ref{Th:EHH} is useful for our aim to compute the primary decompositions of $J(G)$  whose blocks are complete graphs, cycles $C_4$ and diamond graphs $D$ (see also Figure \ref{CM-C4toK4}).

\begin{proposition}\label{lem:pathcutset}
Let $G$ be a graph such that $B(G)$ is a path.  We use notation \eqref{pathnotation}. Let $B_1=K_{m_1}$, $B_l=K_{m_l} $ with $m_1$, $m_l\geq 2$ and 
\[
 B_i\in \{C_4, D, K_{m_i} \mid m_i\geq 3 \}\mbox{ for }2\leq i\leq l-1
\]
with the following labelling on the vertices of the blocks $B_i\in \{C_4,D\}$:
\begin{itemize}
\item $V(C_4)=\{v_i,v_i',w_i,w_i'\}$, $E(C_4)=\{\{v_i,w_i\},\{v_i,v_i'\},\{v_i',w_i'\},\{w_i,w_i'\}\}$;
\item $V(D)=\{v_i,v_i',w_i,f_i\}$, $E(D)=\{\{v_i,w_i\},\{v_i,v_i'\},\{v_i',f_i\},\{w_i,f_i\},\{v_i',w_i\}\}$.
\end{itemize}
Then $T\in \CC(G)$ if and only if $T\subseteq V\sqcup V'$ where $V=\{w_1,\ldots,w_{l-1}\}$,  
\[
V'=\left(\bigcup_{B_i=C_4} \{v_i',w_i'\}\right) \cup \left(\bigcup _{B_i=D} \{v_i'\}\right)                                                                                                                 
\]
and satisfying the following conditions:
\begin{enumerate}
 \item if $v_i'\in T$ then $w_i\in T$ and  $v_i\notin T$; 
 \item if $w_i'\in T$ then $v_i\in T$ and  $w_i\notin T$. 
\end{enumerate}
\end{proposition}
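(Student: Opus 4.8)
The plan is to characterize cutsets of $G$ by analyzing one block at a time, using the structure of the block graph as a path together with the local geometry of each $C_4$ and $D$ block. First I would establish the ``only if'' direction: suppose $T\in\CC(G)$. Since $B(G)$ is a path and $J(G)$ is (or will be shown to be) unmixed in the relevant cases, Proposition~\ref{the:C3AndC4Only} already forces the non-complete blocks to be $C_4$ or $D$, and Proposition~\ref{Pro:C4} forces the $(C4)$-condition on each $C_4$ block, which is exactly reflected in the chosen labelling (the two cutpoints $v_i,w_i$ of the block $B_i$ must be adjacent, and the two non-cutpoint vertices are $v_i',w_i'$). The key observation is that a vertex of $G$ can lie in $T$ only if it is a cutpoint of $G_{\ol{T}\cup\{v\}}$; for a vertex $v$ interior to a complete block $K_{m_i}$ with $m_i\ge 3$ that is not one of the connecting cutpoints $w_j$, removing it never disconnects anything, so such vertices cannot be in $T$. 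This already shows $T\subseteq V\sqcup V'$. Then conditions (1) and (2) come from examining what happens inside a single $C_4$ or $D$ block: in a $C_4$ block with vertices $v_i,v_i',w_i,w_i'$ and the square edge structure, the vertex $v_i'$ becomes a cutpoint of the relevant induced graph precisely when its two neighbors $v_i$ and $w_i$ are separated, which forces $w_i\in T$ (to cut off the rest of the path beyond $B_i$) and forbids $v_i\in T$ (otherwise $v_i'$ would become isolated rather than a genuine cutpoint, or equivalently the component count would jump too high, contradicting the cutset condition via Lemma~\ref{lem:cutpoint} and the unmixedness-type counting in Lemma~\ref{lem:unmixwd}); symmetrically for $w_i'$. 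For a $D$ block only $v_i'$ is a candidate non-cutpoint (since $D$ has the extra edge $\{v_i',w_i\}$ making $f_i$ and $w_i$ behave differently), and the same reasoning gives condition (1).

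For the ``if'' direction, I would take any $T\subseteq V\sqcup V'$ satisfying (1) and (2) and verify directly that every $v\in T$ is a cutpoint of $G_{\ol{T\setminus\{v\}}}$, i.e. $T\in\CC(G)$. The building block here is Lemma~\ref{Lem:pathgraph}: the power set of $\{w_1,\dots,w_{l-1}\}$ already consists of cutsets (once we pass to the auxiliary graph replacing $C_4, D$ by $K_4$ where needed, so the hypothesis $|V(B_i)|\ge 3$ applies). Adding an element $v_i'$ (with $w_i\in T$, $v_i\notin T$) splits the component containing $v_i'$ into two: one piece containing $v_i$ (and everything of $B_i$ and the path toward $B_1$ that survives) and one piece containing $w_i'$, so $v_i'$ is genuinely a cutpoint; adding $w_i'$ (with $v_i\in T$, $w_i\notin T$) is symmetric. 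One must also check that adding $v_i'$ does not destroy the cutpoint property of the $w_j$'s or of other added vertices — but since all these modifications are confined inside their own blocks and the blocks meet only at the $w_j$'s, the local checks are independent, and this is where conditions (1)–(2) do exactly the work needed to keep each local component-count increment equal to $1$.

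The main obstacle I anticipate is the bookkeeping in the ``if'' direction when several elements of $V'$ from different blocks, together with several $w_j$'s, are in $T$ simultaneously: one needs to argue that the connected components of $G_{\ol{T}}$ decompose cleanly block-by-block and that inserting any single vertex of $T$ back increases $c(\cdot)$ by exactly one. I would handle this by an induction on $|T|$ paralleling the proof of Lemma~\ref{Lem:pathgraph}, peeling off one block-endpoint $w_{i_1}$ at a time and treating the $V'$-vertices inside each block as a purely local modification; the compatibility conditions (1) and (2) are precisely what guarantees that within block $B_i$ the induced subgraph on $V(B_i)\setminus T$ together with the forced $w$'s has the right component structure (in a $C_4$, removing $w_i$ and keeping $v_i$ isolates $v_i'$ from $w_i'$ — two components from what was a path — matching the jump expected from adding $v_i'$). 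A secondary subtlety is making sure that when $B_i=D$ we do not accidentally allow $w_i'$-type elements (there is no $w_i'$ in a $D$ block), which is already built into the definition of $V'$, so no separate argument is needed there.
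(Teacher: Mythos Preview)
Your overall strategy for both directions is close to the paper's, but there is a genuine logical gap in your ``only if'' argument. You invoke Propositions~\ref{the:C3AndC4Only} and~\ref{Pro:C4} and the ``unmixedness-type counting'' of Lemma~\ref{lem:unmixwd}, but unmixedness is \emph{not} a hypothesis of this proposition. The block types $C_4$, $D$, $K_{m_i}$ and the labelling are simply \emph{given}; unmixedness of $J(G)$ is deduced only afterwards, in Corollary~\ref{Cor:pathunmixed}, using this very proposition. So appealing to unmixedness here would be circular. The correct local argument, which you almost state, is purely combinatorial: if $v_i'\in T$ then $v_i'$ must be a cutpoint of $G_{\ol{T}\cup\{v_i'\}}$. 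In the $C_4$ block the neighbours of $v_i'$ are $v_i$ and $w_i'$ (not $v_i$ and $w_i$ as you wrote); for $v_i'$ to be a cutpoint, both neighbours must survive in $G_{\ol{T}\cup\{v_i'\}}$ (otherwise $v_i'$ is a leaf or isolated, hence not a cutpoint), so $v_i,w_i'\notin T$, and the only alternative $v_i$--$w_i'$ path goes through $w_i$, forcing $w_i\in T$. No component-counting or unmixedness is needed.

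For the ``if'' direction your plan is essentially the paper's: start from Lemma~\ref{Lem:pathgraph} for $T\subseteq V$ and build up. The paper organises the induction slightly differently---it inducts on the number of $V'$-elements in $T$ by peeling off the \emph{first} block $B_i$ contributing an element of $T\cap V'$, reducing to a shorter path graph $H=B_{i+1}'\cup\cdots\cup B_l$ and a cutset $T'=T\setminus\{v_i',w_i\}$ (or $T\setminus\{w_i',v_i\}$) of $H$---whereas you propose to induct on $|T|$ and add $V'$-elements one at a time on top of the $w_j$'s. Both work, but the paper's graph-shrinking induction handles the ``bookkeeping'' you flag more cleanly: once you remove $\{v_i',w_i\}$ and pass to $H$, the remaining $T'$ automatically satisfies (1)--(2) for $H$, and the connected components of $G_{\ol T}$ are exactly $B_1\cup\cdots\cup B_{i-1}$, an isolated vertex ($w_i'$ or $f_i$), and the components of $H_{\ol{T'}}$. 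Your block-by-block independence observation is correct and is what makes either induction go through.
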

\begin{proof} 
For the sake of completeness we give the equivalent conditions to (1) and (2) that are:
\begin{enumerate}
 \item[(1')] if $v_i\in T$ or $w_i\notin T$  then $v_i'\notin T$; 
 \item[(2')] if $w_i\in T$ or $v_i\notin T$  then $w_i'\notin T$. 
 \end{enumerate}
Suppose $T$ is a cutset. Then $v\in T$ is not a free vertex. Hence it follows $T\subseteq V\sqcup V'$. In fact a vertex $f_i$ in a diamond graph is a free vertex in the clique $\{v_i',w_i,f_i\}$ and the same holds for all the vertices that are not cutpoints in a block that is a complete graph. Moreover, if $T\subseteq V$ then it satisfies condition 1. and 2. in a trivial way. Hence we assume $v'\in T$ with $v'\in V'$. That is either $v'=v_i'\in V(B_i)$ with $B_i\in\{C_4, D\}$ or $v'=w_i'\in V(B_i)$ with $B_i=C_4$. We consider only the case $v'=v_i'\in V(B_i)$ with $B_i=C_4$ since the other cases follow by similar argument. Since $T$ is a cutset, the graph $G_{S}$ with $S=\ol{T}\cup\{v_i'\}$ has a cutpoint in $v_i'$. Hence the two vertices $v_i$ and $w_i'$ that are adjacent to $v_i'$ in $G$ must belong to $G_S$, that is $v_i$ and $w_i'$ are not in $T$. By the same reason since $v_i'$ is a cutpoint in $G_S$ and since $w_i$ is adjacent to $v_i$ and $w_i'$ in $G$, too,  then $w_i\notin S$ and  $w_i\in T$. That is condition (1) is satisfied.

Now suppose $T\subseteq V\sqcup V'$ satisfying conditions (1) and (2) and let $n'=|V'|$.  We prove that $T$ is a cutset by induction on $n'$. If $n'=0$ that is $T\subseteq V$, then it is a cutset by Lemma \ref{Lem:pathgraph}. 

Let $n'\geq 1$ and  let $v_2'\in V'$ then $B_2\in \{C_4,D\}$ (see Figure \ref{CM-C4toK4}). We assume $B_2=C_4$ since the other case is similar. By conditions (1) and (2'). $w_2\in T$ and neither $v_2$ nor $w_2'$  belongs to $T$. That is the graph $G_{\ol{T}}$ contains $2$ connected components, $B_1$ and the isolated vertex $w_2'$, and a subgraph of $H=B_3'\union B_4\union \cdots \union B_l$, where $B_3'$ is obtained removing the vertex $w_2=v_3$ from $B_3$.  

We claim that $T'=T\setminus\{v_2',w_2\}$ is a cutset of $H$. Moreover $H_{\ol{T'}}$ has a connected component containing a vertex in $B_3$ that is adjacent to $w_2$. By the claim it easily follows that $T$ is a cutset of $G$ and $G_{\ol{T}}$ contains the following connected components: $B_1$, the isolated vertex $w_2'$, and the connected components of $H_{\ol{T'}}$.

If $B_3$ is a complete graph  then $B_3'$ is a complete graph, too. Hence $H$ satisfies the hypothesis of our Proposition and $T'$ is a cutset by induction hypothesis.
If $B_3=D$ then $B_3'$ is the complete graph $K_3$. Moreover $v_3'\notin T$ by condition (1') applied to $w_2=v_3\in T$. Hence the set $T'$ contains at most the cutpoint $w_3$. Also in this case the graph $H$ satisfies the hypothesis of our Proposition and $T'$ is a cutset by induction hypothesis. If $B_3=C_4$ then $B_3'$ is a path of length $2$ whose edges are 
\[
 \{v_3',w_3'\},\{w_3',w_3\}.
\]
We note that also in this case $v_3'\notin T$ by condition (1'), and $T$ contains at most one of the vertices $w_3$ and $w_3'$. In fact $v_3'$ is the free vertex of the path and only one of the cutpoints $w_3$, $w_3'$ can appear in $T'$. Moreover we can apply induction hypothesis on  the subgraph  $H'$ obtained removing $v_3'$ from $H$.

By similar argument to the previous case we obtain the assertion under the assumption $w_2'\in V'$. In this case $B_2=C_4$, $v_2\in T$ and $G_{\ol{T}}$ contains the following connected components: the isolated vertex $v_2'$, $B_1'$, obtained removing the vertex $v_2=w_1$ from $B_1$, and the connected components of $H_{\ol{T'}}$ with $H=B_3\cup\cdots\cup B_l$ and $T'=T\setminus \{ w_2',v_2\}$.
\end{proof}
\begin{corollary}\label{Cor:pathunmixed}
 Let $G$ be a graph that satisfies the hypothesis of Proposition \ref{lem:pathcutset}. Then $J(G)$ is unmixed.
\end{corollary}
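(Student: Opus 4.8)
The plan is to apply Lemma~\ref{lem:unmixwd}: since $G$ is connected, it suffices to prove that $c(T)=|T|+1$ for every $T\in\CC(G)$. By Proposition~\ref{lem:pathcutset} such a $T$ decomposes uniquely as $T=W\sqcup U$ with $W=T\cap V$ and $U=T\cap V'$, the two parts being constrained by conditions (1) and (2). I would compute $c(T)$ in two stages: first describe the connected components of $G_{\ol{W}}$, then track what happens when the vertices of $U$ are deleted as well.

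First, I would show $c(W)=|W|+1$. Writing $W=\{w_{j_1},\ldots,w_{j_k}\}$ with $j_1<\cdots<j_k$, deleting these cutpoints breaks the block path of $G$ into the $k+1$ consecutive ``runs'' $B_1\cup\cdots\cup B_{j_1}$, $B_{j_1+1}\cup\cdots\cup B_{j_2},\ \ldots,\ B_{j_k+1}\cup\cdots\cup B_l$, each with the relevant deleted cutpoints removed from its first and/or last block. Every run is nonempty and connected in $G_{\ol{W}}$: a $C_4$ and a $D$ stay connected after one or both of their cutpoints are removed (they become a path on three vertices, a triangle, or an edge); a block $K_{m_i}$ with $m_i\geq3$ stays connected and nonempty after one or both cutpoints are removed; and the end blocks $B_1=K_{m_1}$, $B_l=K_{m_l}$ with $m_1,m_l\geq2$ stay nonempty after their single cutpoint is removed. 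Inside a run, consecutive blocks are glued along a cutpoint not in $W$, so the run is connected; and since two blocks of a block path meet in at most their common cutpoint, distinct runs are vertex-disjoint in $G_{\ol{W}}$ with no edges between them. Hence the runs are exactly the connected components of $G_{\ol{W}}$ and $c(W)=k+1=|W|+1$; this refines Lemma~\ref{Lem:pathgraph}.

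Next, I would check that deleting $U$ raises the component count by exactly $|U|$. If $v_i'\in U$ then condition (1) forces $w_i\in W$ and $v_i\notin W$, so in $G_{\ol{W}}$ the surviving part of $B_i$ is the path $v_i-v_i'-w_i'$ (when $B_i=C_4$) or $v_i-v_i'-f_i$ (when $B_i=D$), joined to the rest of its run only through $v_i=w_{i-1}$, while $w_i'$ (resp.\ $f_i$) has no other neighbour in $G_{\ol{W}}$; hence deleting $v_i'$ splits off the single vertex $w_i'$ (resp.\ $f_i$) and nothing else. Symmetrically, if $w_i'\in U$ then $B_i=C_4$ and condition (2) forces $v_i\in W$, $w_i\notin W$, the surviving part of $B_i$ is the path $v_i'-w_i'-w_i$, and deleting $w_i'$ splits off the single vertex $v_i'$. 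Conditions (1) and (2) also forbid both $v_i'$ and $w_i'$ from lying in $T$, so for each $i$ at most one vertex of $V'\cap V(B_i)$ lies in $U$; moreover no two vertices of $V'$ are adjacent in $G$, since every $G$-neighbour of a vertex of $V'$ is a cutpoint or a vertex $f_i$, and none of these lie in $V'$. Thus the splittings above occur in distinct blocks and are mutually independent, so deleting all of $U$ increases the number of components by exactly $|U|$. Combining the two stages,
\[
c(T)=c(W)+|U|=(|W|+1)+|U|=|T|+1,
\]
and Lemma~\ref{lem:unmixwd} yields that $J(G)$ is unmixed.

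The step I expect to be the main obstacle is the first one: one has to treat with care all the boundary configurations — a run consisting of a single block with both of its cutpoints deleted, the end blocks $B_1$ and $B_l$ (which may be $K_2$), and runs mixing the three allowed block types — but each of these verifications is essentially already contained in the proof of Proposition~\ref{lem:pathcutset}, so the argument is bookkeeping rather than a new idea.
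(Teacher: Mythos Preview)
Your argument is correct and mirrors the paper's proof: split $T$ into its cutpoint part $W$ and its $V'$-part $U$, show that $G_{\ol{W}}$ has exactly $|W|+1$ connected ``runs'', and then verify that each vertex of $U$ peels off one additional isolated vertex ($w_i'$, $f_i$, or $v_i'$), giving $c(T)=|W|+1+|U|=|T|+1$. One small correction: your claim that no two vertices of $V'$ are adjacent in $G$ is false (in a $C_4$ block one has $\{v_i',w_i'\}\in E(G)$); what your independence step actually requires, and what already follows from the clause immediately before, is only that no two vertices of $U$ are adjacent --- which holds because at most one $V'$-vertex per block lies in $U$ and $V'$-vertices from distinct blocks are never adjacent --- so with this adjustment the argument stands.
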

\begin{proof}
By Lemma \ref{lem:unmixwd} it is sufficient to show that $c(T)=|T|+1$ for all $T\in \CC(G)$. Thanks to Lemma \ref{Lem:pathgraph}, $T=\{w_{i_1}, w_{i_2},\ldots w_{i_t}\}\subseteq \{w_1,\ldots,w_{l-1}\}$ with $1\leq i_1<i_2<\cdots<i_t\leq l-1$ is a cutset of $G$. We note that 
\begin{equation}\label{equT}
 G_{\ol{T}}=G_1\sqcup G_2\sqcup \cdots \sqcup G_{t+1}
\end{equation}
where $G_1=B_1\cup \cdots \cup B_{i_1-1}\cup B''_{i_1}$, $G_{t+1}=B_{i_t+1}'\cup B_{i_t+2}\cup \cdots \cup B_l$ and
\[
G_j=B_{i_j+1}'\cup  B_{i_j+2}\cup\cdots \cup B_{i_{j+1}-1}\cup B''_{i_{j+1}}\mbox{ with }j=2,\ldots,t
\]
and $B_i''$ is the block obtained removing $w_i$ from $B_i$, $B_i'$ is the block obtained removing $w_{i-1}=v_i$ from $B_i$. We easily observe that $G_i$ is connected for $1\leq i\leq t+1$. Hence $c(T)=t+1$.
Now let $T\sqcup T'\in \CC(G)$ with $T=\{w_{i_1}, w_{i_2},\ldots w_{i_t}\}\subseteq \{w_1,\ldots,w_{l-1}\}$ and $T'\cap \{w_1,\ldots, w_{l-1}\}=\emptyset$ with $t'=|T'|>0$. This implies that $v'\in T'$ is either $v_i'$ or $w_i'$ of $B_i\in \{C_4, D\}$. If we focus on $T$  the representation of $G_{\ol{T}}$ in  $\eqref{equT}$ holds in this case, too. Without loss of generality let $v'\in V(G_1)$.  Since the only cutpoint that induces the connected component $G_1$ is $w_{i_1}$, by condition (1) of Proposition \ref{lem:pathcutset}, $v'=v'_{i_1}$. Hence
\begin{equation}
 G_{\ol{T\cup\{v'_{i_1}\}}}=(G_1)_{\ol{\{v'_{i_1}\}}}\sqcup G_2\sqcup \cdots \sqcup G_{t+1}
\end{equation}
where $(G_1)_{\ol{\{v'_{i_1}\}}}=B_1\cup \cdots \cup B_{i_1-1}\sqcup \{v\}$ where $v=f_{i_1}$ if $B_{i_1}=D$ or $v=w'_{i_1}$ if $B_{i_1}=C_4$ and $c(T\cup\{v'_{i_1}\})=(t+1)+1$ where the last summand is induced by the isolated vertex $v$. The same argument holds for all $v'\in T'$.
\end{proof}

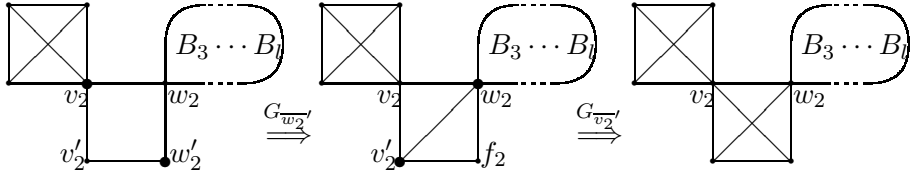
\begin{figure}
\begin{center}
\setlength{\unitlength}{.26cm}
\begin{picture}(45,8)

\newsavebox{\blocksCM}
\savebox{\blocksCM}
  (12,8)[bl]{

  \put(00,04){\circle*{.3}}
  \put(04,04){\circle*{.3}}
  \put(00,08){\circle*{.3}}
  \put(04,08){\circle*{.3}}

  \put(00,04){\line(1,0){4}}
  \put(00,08){\line(1,0){4}}

  \put(00,04){\line(0,1){4}}
  \put(04,04){\line(0,1){4}}

  \put(00,04){\line(1,1){4}}
  \put(04,04){\line(-1,1){4}}

  \put(04,00){\circle*{.3}}
  \put(08,00){\circle*{.3}}
  \put(08,04){\circle*{.3}}

  \put(04,04){\line(1,0){4}}
  \put(04,00){\line(1,0){4}}
  \put(04,04){\line(0,-1){4}}
  \put(08,04){\line(0,-1){4}}

  \put(02.8,03){$v_2$}
  \put(08.1,03){$w_2$}
  
  \put(08,04){\line(1,0){2}}
  \multiput(10.2,04)(0.5,0){4}{\line(1,0){.2}}
  \put(08,04){\line(0,1){2}}
  \multiput(10.2,08)(0.5,0){4}{\line(1,0){.2}}
  
  \put(08.5,05.5){$B_3\cdots B_l$ }

  \qbezier(08,06)(08,08)(10,08)

  \qbezier(12,08)(14,08)(14,06)
  \qbezier(14,06)(14,04)(12,04)
    
}

\put(00,0){\usebox{\blocksCM}}
\put(02.6,00){$v_2'$}
\put(08.2,00){$w_2'$}

\put(04,04){\circle*{.5}}
\put(08,00){\circle*{.5}}

\put(13,01){$\stackrel{G_{\ol{w_2}'}}{\Longrightarrow}$}

\put(16,0){\usebox{\blocksCM}}
\put(20,00){\line(1,1){4}}

\put(18.5,00){$v_2'$}
\put(24.1,00){$f_2$}

\put(20,00){\circle*{.5}}
\put(24,04){\circle*{.5}}

\put(29,01){$\stackrel{G_{\ol{v_2}'}}{\Longrightarrow}$}

\put(32,0){\usebox{\blocksCM}}
\put(36,00){\line(1,1){4}}
\put(40,00){\line(-1,1){4}}

\end{picture}
 
\end{center}
\caption{Cases $B_2 \in \{C_4, D, K_4\}$ of Theorem \ref{lem:pathcm}}\label{CM-C4toK4} 
\end{figure}

\begin{theorem}\label{lem:pathcm}
 Let $G$ be a graph that satisfies the hypothesis of Proposition \ref{lem:pathcutset}. Then $J(G)$ is Cohen-Macaulay.
\end{theorem}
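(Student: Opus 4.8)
The plan is to prove the statement by induction, using the unmixedness already established in Corollary \ref{Cor:pathunmixed} to pin down the target dimension and then bounding the depth from below through a Mayer--Vietoris sequence. Since $G$ is connected, $P_\emptyset(G)$ gives $\height J(G)=n-1$, so $\dim S/J(G)=n+1$; as $J(G)$ is unmixed it suffices to prove $\depth S/J(G)\ge n+1$. I would induct on the quantity $d(G)=2\,\#\{i:B_i=C_4\}+\#\{i:B_i=D\}$, which measures how far $G$ is from having only complete blocks. If $d(G)=0$ all blocks are complete and, $B(G)$ being a path and hence a tree, Theorem \ref{Th:EHH} gives that $J(G)$ is Cohen--Macaulay; this is the base case.

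For the inductive step, pick a non-complete block $B_i$ and a non-cutpoint $v$ of it whose neighbourhood completion turns $C_4$ into $D$ or $D$ into $K_4$, exactly as in Example \ref{Ex:C4DK4} and Figure \ref{CM-C4toK4}: take $v=w_i'$ when $B_i=C_4$ and $v=v_i'$ when $B_i=D$. Then $G_{\ol v}=G'$ still satisfies the hypotheses of Proposition \ref{lem:pathcutset} (a diamond is an allowed interior block and $K_4$ is complete) and $d(G')=d(G)-1$, so $J(G')$ is Cohen--Macaulay of dimension $n+1$ by the inductive hypothesis. Corollary \ref{Lem:cutset} then yields
\[
J(G)=J(G')\cap Q_v,\qquad Q_v=\bigcap_{T\in\CC(G),\,v\in T}P_T(G),
\]
and I would feed this into the short exact sequence
\[
0\longrightarrow S/J(G)\longrightarrow S/J(G')\oplus S/Q_v\longrightarrow S/(J(G')+Q_v)\longrightarrow 0.
\]
By the depth lemma, $\depth S/J(G)\ge\min\{\depth S/J(G'),\,\depth S/Q_v,\,\depth S/(J(G')+Q_v)+1\}$, so the proof reduces to showing that $S/Q_v$ is Cohen--Macaulay of dimension $n+1$ and that $S/(J(G')+Q_v)$ is Cohen--Macaulay of dimension $n$.

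To control $Q_v$ I would use Proposition \ref{lem:pathcutset}: every $T\ni v$ contains a second forced cutpoint (for $v=w_i'$ it forces $v_i\in T$, $w_i\notin T$; for $v=v_i'$ it forces $w_i\in T$, $v_i\notin T$), so two pairs of variables are common to all the $P_T$ in the intersection and split off as linear forms. Modulo these forms, $Q_v$ becomes the intersection of those $P_{T'}$ of the induced graph $G^*=G\setminus\{v,\,\text{forced cutpoint}\}$ whose cutsets avoid the remaining exposed neighbour; by Proposition \ref{Pro:cutset} this is exactly $J(G^*_{\ol{w_i}})$, the binomial edge ideal obtained by completing that neighbour. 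Thus, up to the split linear forms, $S/Q_v$ is the coordinate ring of a disjoint union of complete graphs, isolated vertices, and a shorter block-path graph, each piece being Cohen--Macaulay by Theorem \ref{Th:EHH}, by induction, or after a further decomposition handled by Lemma \ref{The:free}; its dimension is $n+1$ because $Q_v$ is an intersection of the height-$(n-1)$ primes $P_T(G)$. The ideal $J(G')+Q_v$ contains the same linear forms, and modulo them it is the binomial edge ideal of $G'$ with the two deleted vertices removed; the deletion of the forced cutpoint severs one further connection, which is what produces the drop to dimension $n$, the resulting graph again being a disjoint union of Cohen--Macaulay pieces.

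The main obstacle will be the bookkeeping in the last paragraph: precisely matching $Q_v$ and $J(G')+Q_v$ to binomial edge ideals of explicit graphs and verifying that $S/(J(G')+Q_v)$ really has dimension exactly $n$ rather than $n+1$. The delicate point is that deleting a cutpoint can turn an interior $C_4$ or $D$ into an endpoint block, or create a free vertex, which falls just outside the hypotheses of Proposition \ref{lem:pathcutset}; completing the exposed neighbour via Proposition \ref{Pro:cutset} and peeling off free vertices with Lemma \ref{The:free} is what brings every piece back into a class already known to be Cohen--Macaulay, and making this reduction terminate correctly in every configuration of $B_{i-1}$ and $B_{i+1}$ is where the real care is needed.
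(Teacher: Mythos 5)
Your overall architecture is the same as the paper's: compute $\dim S/J(G)=n+1$ from unmixedness, reduce to a depth bound, use Corollary \ref{Lem:cutset} to write $J(G)=J(G_{\ol v})\cap Q_v$ with $v$ the vertex whose neighbourhood-completion turns $C_4$ into $D$ or $D$ into $K_4$, and run the Depth Lemma on the same short exact sequence, with $Q_v$ and $Q_v+J(G')$ analysed as tensor products of smaller binomial edge ideals. Your induction on $d(G)=2\#\{B_i=C_4\}+\#\{B_i=D\}$ rather than on the number of blocks is a harmless, arguably cleaner, repackaging of the paper's chain of cases $C_4\to D\to K_{m}$. However, there is a genuine gap exactly at the point you flag as delicate, and the tool you name there does not suffice. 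When you identify $Q_v$, the factor on the side of the \emph{forced non-member} cutpoint acquires a completion: e.g.\ for $B_i=D$, $v=v_i'$ (so $w_i\in T$, $v_i\notin T$ is forced), that factor is $J\bigl((B_1\cup\cdots\cup B_{i-1})_{\ol{v_i}}\bigr)$, and if $B_{i-1}=C_4$ this graph ends in the diamond $(B_{i-1})_{\ol{w_{i-1}}}$ attached to $B_{i-2}$ through $v_{i-1}$, which is \emph{not} a free vertex of that diamond (it lies in both of its triangles); the mirror situation occurs on the right when $B_i=C_4$, $v=w_i'$ and $B_{i+1}=C_4$, where the factor starts with $(B_{i+1})_{\ol{v_{i+1}}}$. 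Such a piece has a non-complete endpoint block, so it violates the hypotheses of Proposition \ref{lem:pathcutset}, and it is indecomposable at the offending junction; hence ``peeling off free vertices with Lemma \ref{The:free}'' cannot bring it into your inductive class, because no decomposition of this graph separates the diamond from the rest.

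The paper needs an extra idea precisely here, recorded as \eqref{eq:alsoDisCM}: a block path whose end block is a diamond attached through a non-free vertex is Cohen--Macaulay because it is a Lemma \ref{The:free} \emph{factor} of a larger graph (complete block glued at the diamond's free vertex) which the main induction has already shown to be Cohen--Macaulay; that is, one uses the direction ``whole CM $\Rightarrow$ parts CM'' on an enlarged graph, not a decomposition of the problematic piece itself. The same fix is available in your framework: glue a $K_2$ at the free vertex of the diamond end-block; the enlarged graph satisfies the hypotheses of Proposition \ref{lem:pathcutset} and has strictly smaller $d$ than your original $G$ (the adjacent $C_4$ of $G$ has become a $D$, and the block $B_i$ has disappeared), so your inductive hypothesis applies to it, and Lemma \ref{The:free} then transfers Cohen--Macaulayness to the factor. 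Without this step, or an explicit enlargement of your inductive class to admit diamond end-blocks, the induction does not close. A second, smaller, point: the identification of the factors of $Q_v$ as binomial edge ideals (your appeal to Proposition \ref{Pro:cutset}) requires proving the cutset correspondence the paper states as \eqref{cHcG} and \eqref{cHcG1}; this is real work, but it is bookkeeping of the kind you anticipate, not a conceptual obstruction.
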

\begin{proof}
We start observing that $\dim S/J(G)=n+1$. This follows by Corollary \ref{Cor:pathunmixed} and the formula (see \cite{HH})
\begin{equation}\label{dimformula}
 \dim S/J(G)=\max\{n-|T|+c(T)\}.
\end{equation}
Hence it is sufficient to prove  that $\depth S/J(G)\geq n+1$ using induction on $l$, the number of blocks of $G$. Our strategy is to focus on the block $B_2$. In particular when $B_2\in \{D,C_4\}$ (see Figure \ref{CM-C4toK4}) we consider the  vertex $v'\in\{v_2',w_2'\}$ and the following exact sequence
\begin{equation}\label{seq1}
0\longrightarrow S/J(G)\longrightarrow S/Q_{v'}\oplus S/J(G')\longrightarrow S/(Q_{v'}+J(G'))\longrightarrow 0
\end{equation}
with $G'=G_{\ol{v'}}$. By Corollary \ref{Lem:pathgraph}, $J(G')$ is a binomial edge ideal and the graph $G'$ satisfies the hypothesis of our Theorem but with a second block that has less cutsets than $B_2$ (see Figure \eqref{CM-C4toK4}). Moreover $S/Q_{v'}$ (respectively $S/Q_{v'}+J(G')$) is a tensor product of  $3$ (respectively $2$) quotient rings whose definining ideals are  binomial edge ideals, and it is Cohen-Macaulay using induction. To obtain this goal we have three cases to study: $B_2=K_{m_2}$, $B_2=D$ and $B_2=C_4$. We start induction with $l=3$. 

Case $B_2=K_{m_2}$.  $S/J(G)$ is Cohen-Macaulay by Theorem 1.1 of \cite{HEH}. 

Case $B_2=D$. We set $G'=G_{\ol{v_2}'}$. By Proposition \ref{Pro:cutset} we have  $E(G')=E(G)\cup\{\{v_2,f_2\}\}$, that is the second block in $G'$ is the complete graph on the vertices $\{v_2,v_2',w_2,f_2\}$. By Corollary \ref{Lem:cutset}, $J(G)=J(G')\cap Q_{v_2'}$, where $Q_{v_2'}=P_{\{v_2',w_2\}}=(x_{v_2'},y_{v_2'})+(x_{w_2},y_{w_2})+J(B_1)+J(B_3')$ and $B_3'$ is obtained by removing the vertex $w_2=v_3$ from the complete graph $B_3$. By Depth Lemma applied on the sequence \eqref{seq1} with $v'=v_2'$ we obtain the assertion. In fact $G'$ is Cohen-Macaulay by the case $B_2=K_{m_2}$ and its $\depth$ is $n+1$. Let $S'=S/(x_{v_2'},y_{v_2'},x_{w_2},y_{w_2})$, we obtain 
\begin{multline}\label{tensor}
 S/Q_{v_2'}\cong S'/J(B_1)+J(B_3')\cong  \\
\cong\frac{K[\{x_i,y_i\}:i\in V(B_1)]}{J(B_1)}\otimes K[x_{f_2},y_{f_2}]\otimes \frac{K[\{x_i,y_i\}:i\in V(B_3')]}{J(B'_3)}
\end{multline}
where the $3$ quotient rings associated to the complete graphs $B_1$, $B_3'$ and the isolated vertex $f_2$  are Cohen-Macaulay. Using the formula \eqref{dimformula} for each ring, and adding the results thanks to \eqref{tensor}, we obtain
\[
 \depth S/Q_{v_2'}=|(V(B_1)+1) +(1+1)+(|V(B_3')|+1)=n+1.
\]
By the same argument $\depth S/(Q_{v_2'}+J(G'))=n$. In fact
\begin{multline}\label{tensorwithdepthn}
 S/Q_{v_2'}+J(G')\cong S'/J(B_1\cup E')+J(B_3')\cong  \\
\cong\frac{K[\{x_i,y_i\}:i\in V(B_1)\cup\{f_2\}]}{J(B_1\cup E)}\otimes \frac{K[\{x_i,y_i\}:i\in V(B_3')]}{J(B'_3)}
\end{multline}
where $E$ is the edge $\{w_1=v_2,f_2\}$. Hence  $J(B_1\cup E)$ is Cohen-Macaulay by Theorem 1.1 of \cite{HEH}. By Depth Lemma the assertion follows.

Case $B_2=C_4$. We set $v'=w_2'$ and $G'=G_{\ol{w_2}'}$. By Proposition \ref{Pro:cutset} we have  $E(G')=E(G)\cup\{\{v_2',w_2\}\}$, that is the second block in $G'$ is a diamond, hence $J(G')$ is Cohen-Macaualy by the case $B_2=D$. Moreover  $\depth S/Q_{v'}=n+1$  and $\depth S/Q_{v'}+J(G')=n$. In fact $Q_{v'}=P_{\{v_2,w_2'\}}=(x_{w_2'},y_{w_2'})+(x_{v_2},y_{v_2})+J(B_1')+J(B_3)$, where $B_1'$ is the complete graph obtained removing the vertex $v_2=w_1$ from the graph $B_1$. Using a representation of $S/Q_{v'}$ similar to \eqref{tensor} we easily obtain the assertion. Moreover $Q_{v'}+J(G')=(x_{w_2'},y_{w_2'})+(x_{v_2},y_{v_2})+J(B_1')+J(B_3\cup E)$ where $E$ is the edge $\{v_2',w_2\}$. Also in this case we obtain the assertion using a representation of $S/Q_{v'}+J(G')$ equivalent to the one used in \eqref{tensorwithdepthn}. 

Let $l>3$. Case $B_2=K_{m_2}$. By Lemma \ref{The:free} applied on the graphs $G_1=B_1$ and $G_2=B_2\cup \cdots \cup B_l$, since $G_1$ is a complete graph and $G_2$ is Cohen-Macaulay by induction hypothesis, we obtain that $G$ is Cohen-Macaulay.

Case $B_2=D$. As in the case $B_2=D$ with $l=3$, setting $G'=G_{\ol{v_2}'}$, the second block in $G'$, namely $B_2'$, is the complete graph on the vertices $\{v_2,v_2',w_2,w_2'\}$, hence
\[
 G'=B_1\cup B_2'\cup B_3\cup \cdots \cup B_l.
\]
Therefore $J(G')$ is Cohen-Macaulay using induction hypothesis and Lemma \ref{The:free} applied on $B_1$ and $B_2'\cup B_3\cup \cdots \cup B_l$.
By Corollary \ref{Lem:cutset} $Q_{v_2'} = \bigcap_{T\in\CC(G), v_2'\in T} P_T(G)$. Thanks to condition (1) of Proposition \ref{lem:pathcutset} for each $T\in \CC(G)$ with $v_2'\in T$ then $w_2\in T$ and $v_2\notin T$. Hence $Q_{v_2'}=J(B_1) + (x_{v_2'},y_{v_2'},x_{w_2},y_{w_2})+J$. We claim that $J=J(H)$, where $H=B_3'\cup B_4\cup \cdots \cup B_l$, with $B_3'$ obtained removing the vertex $w_2=v_3$ from $B_3$. By the claim, and defining $S'=S/(x_{v_2'},y_{v_2'},x_{w_2},y_{w_2})$, we obtain a representation of $S/Q_{v_2'}$ by tensor product similar to \eqref{tensor}, that is
\[
\frac{K[\{x_i,y_i\}:i\in V(B_1)]}{J(B_1)}\otimes K[x_{f_2},y_{f_2}]\otimes \frac{K[\{x_i,y_i\}:i\in V(H)]}{J(H)},
\]
and using induction hypothesis on $H$ we obtain the assertion. The claim follows proving the condition
\begin{equation}\label{cHcG}
T\in \CC(H) \mbox{ if and only if }T\cup \{v_2',w_2\}\in \CC(G). 
\end{equation}
\textbf{Proof of \eqref{cHcG}}. Case $B_3=D$.  Let $B_3'=K_3$ on the vertices $\{v_3',f_3,w_3\}$, then $H$ satisfies Proposition \ref{lem:pathcutset}. Let  $T\in \CC(H)$. To prove that $T\cup \{v_2',w_2\}\in \CC(G)$ it is enough to check conditions (1) and (2) for all the vertices $v'\in T'=(T\cup \{v_2',w_2\})\setminus \{w_1,\ldots, w_{l-1}\}$.
If $v'\in V(B_2)\cap T'$ then $v'=v_2'$ and it satisfies condition (1). If $v'\in V(B_i)\cap T'$ with $4\leq i\leq l$ then it satisfies condition 1. and 2. with respect to $G$ since it satisfies the same conditions with respect to $H$. We end this implication observing that $V(B_3)\cap T'=\emptyset$. If $T\subseteq V(H)$ with $T\notin \CC(H)$ either there exists a block in $\{B_4,\ldots, B_l\}$ such that either (1) or (2) is not satisfied or there exists $v\in V(B_3')$ with $v\neq w_3$. In both cases $T\cup\{v_2',w_2\}$ does not satisfy (1) and (2) with respect to $G$. 

The cases $B_3\in\{K_{m_3},C_4\}$ follow by similar argument. We only point out some facts when $B_3=C_4$. $B_3'$ is a path defined on the vertex set  $\{v_3',w_3',w_3\}$ and edges $\{v_3',w_3'\}$, $\{w_3',w_3\}$. In this case is useful to consider $H=E\cup H'$ where $E$ is the edge $\{v_3',w_3'\}$ and $H'$ is obtained removing the vertex $v_3'$ from $H$. Then $H'$ satisfies the hypothesis of the Proposition and it is Cohen-Macaulay by induction. By Lemma \ref{The:free} applied to $H'$ and $E$  we obtain that $J(H)$ is Cohen-Macaulay, too. Moreover by Lemma 2.3 of \cite{RR} we have
\[
 \CC(H)=\CC(H')\cup \{T\cup\{w_3'\}:T\in \CC(H')\mbox{ with }w_3\notin T\}.
\]
Observe that the graph studied in this case is decomposable with respect to the vertex $v_2$. Hence also the graphs
\begin{equation}\label{eq:alsoDisCM}
 B_2\cup \cdots \cup B_l\mbox{ with }B_2=D\mbox{ is Cohen-Macaulay.} 
\end{equation}
Case $B_2=C_4$. Let $G'=G_{\ol{w_2}'}$.  As in the case $B_2=C_4$ and $l=3$, the second block of $G'$ is $D$. Then $J(G')$ is Cohen-Macaulay by the case $B_2=D$ with $l>3$, and  $J(G)=J(G')\cap Q_{w_2'}$ with $Q_{w_2'} = \bigcap_{T\in\CC(G), w_2'\in T} P_T(G)$. We claim that $Q_{w_2'}=J(B_1')+(x_{w_2'},y_{w_2'})+(x_{v_2},y_{v_2})+J(H)$ where $B_1'$ is the complete graph obtained removing $v_2=w_1$ from $B_1$. Moreover, using the notation introduced in Proposition \ref{Pro:cutset}, 
\[
H=(B_3\cup B_4\cup\cdots\cup B_l)_{\ol{v_3}}=(B_3)_{\ol{v_3}}\cup B_4\cup\cdots\cup B_l.
\]
By the claim it follows, defining $S'=S/(x_{v_2},y_{v_2},x_{w_2'},y_{w_2'})$, the following representation of $S/Q_{w_2'}$
\[
\frac{K[\{x_i,y_i\}:i\in V(B_1')]}{J(B_1')}\otimes K[x_{v_2'},y_{v_2'}]\otimes \frac{K[\{x_i,y_i\}:i\in V(H)]}{J(H)}.
\]
We focus on the last factor since the other ones are exactly equivalent to the ones already studied. The block $(B_3)_{\ol{v_3}}$ is either a complete graph or a diamond graph with $v_3$ a free vertex. In both cases it is Cohen-Macaulay by induction hypothesis and \eqref{eq:alsoDisCM}. The claim follows by the condition
\begin{equation}\label{cHcG1}
T\in \CC(H) \mbox{ if and only if }T\cup \{v_2,w_2'\}\in \CC(G). 
\end{equation}
To prove \eqref{cHcG1} we use Proposition \ref{lem:pathcutset} and similar arguments to the ones used to prove \eqref{cHcG}.
\end{proof}

\begin{lemma}\label{lem:indecomposablepathcm}
Let $G$ be an indecomposable cactus graph such that $B(G)$ is a path.  We use notation \eqref{pathnotation}. The following conditions are equivalent:
\begin{enumerate}
 \item $J(G)$ is Cohen-Macaulay;
 \item $J(G)$ is unmixed;
 \item One of the following $2$ cases occurs
 \begin{enumerate}
  \item $G\in \{K_2,C_3\}$.
  \item $G$ has $C_4$ subgraphs that satisfy the $(C4)$-condition, $l\geq 3$ and 
  \begin{enumerate}
   \item $B_1,B_l\in \{C_3,K_2\}$,
   \item $B_2$ and $B_{l-1}$ are $C_4$,
   \item $B_i\in \{C_3,C_4\}$ for $3\leq i\leq l-2$ and if $B_i=C_3$ then $B_{i+1}=C_4$.
  \end{enumerate} 
 \end{enumerate}
\end{enumerate}
\end{lemma}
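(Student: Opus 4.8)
The plan is to prove the three assertions equivalent by running the cycle $(1)\Rightarrow(2)\Rightarrow(3)\Rightarrow(1)$. The implication $(1)\Rightarrow(2)$ is the standard fact that a Cohen--Macaulay quotient of a polynomial ring is unmixed (all minimal primes of the same height). For $(3)\Rightarrow(1)$: if $G\in\{K_2,C_3\}$, then $G$ is complete and $B(G)$ is a single vertex, hence a tree, so $J(G)$ is Cohen--Macaulay by Theorem~\ref{Th:EHH}; if instead $G$ is as in case~(b), I would verify that $G$ satisfies the hypotheses of Proposition~\ref{lem:pathcutset} --- by~(i) the end blocks $B_1,B_l$ lie in $\{K_2,K_3\}$ and are complete, by~(ii) and~(iii) every interior block lies in $\{K_3,C_4\}$, and since $B(G)$ is a path each interior $C_4$-block $B_i$ has precisely the two cutpoints $v_i=w_{i-1}$ and $w_i=v_{i+1}$, which are adjacent by the $(C4)$-condition, so $B_i$ carries the labelling demanded in Proposition~\ref{lem:pathcutset} --- and then conclude by Theorem~\ref{lem:pathcm}.

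The substance of the proof is $(2)\Rightarrow(3)$. Assume $J(G)$ unmixed. Since $G$ is a cactus, every block is an edge or a cycle; by Proposition~\ref{the:C3AndC4Only} the cyclic blocks are $C_3$ or $C_4$, and by Proposition~\ref{Pro:C4} every $C_4$-block satisfies the $(C4)$-condition, so each $B_i\in\{K_2,C_3,C_4\}$. If $l=1$, then $G=B_1$ cannot be $C_4$ (it has no cutpoint, contradicting the $(C4)$-condition), so $G\in\{K_2,C_3\}$, which is case~(a). Suppose $l\ge 2$. Since $B(G)$ is a path, the only vertex of $B_1$ that is a cutpoint of $G$ is $w_1$, so a $C_4$ end block would violate the $(C4)$-condition; hence $B_1,B_l\in\{K_2,C_3\}$. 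I now use indecomposability. Every decomposition of $G$ into two subgraphs each with at least one edge occurs along a cutpoint; the cutpoints of $G$ are exactly $w_1,\dots,w_{l-1}$; and the only such decomposition at $w_j$ is $(B_1\cup\cdots\cup B_j)\cup(B_{j+1}\cup\cdots\cup B_l)$. As $w_j$ meets the rest of $G$ only inside $B_j$ (respectively $B_{j+1}$), it is a free vertex of $\Delta(B_1\cup\cdots\cup B_j)$ (respectively $\Delta(B_{j+1}\cup\cdots\cup B_l)$) if and only if $B_j$ (respectively $B_{j+1}$) is a complete graph. Hence $G$ is indecomposable if and only if for every $j$ at least one of $B_j,B_{j+1}$ is a $C_4$. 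Applying this to $j=1$ and $j=l-1$ together with $B_1,B_l\in\{K_2,C_3\}$ yields $B_2=B_{l-1}=C_4$; in particular $l\ne 2$, so $l\ge 3$, and conditions~(i),~(ii) hold.

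It remains to deal with the interior blocks $B_i$ for $3\le i\le l-2$, each in $\{K_2,C_3,C_4\}$. The case $B_i=K_2$ cannot occur: the indecomposability criterion applied to the pairs $(B_{i-1},B_i)$ and $(B_i,B_{i+1})$ forces $B_{i-1}=B_{i+1}=C_4$; writing $V(B_i)=\{w_{i-1},w_i\}$, letting $a$ be the non-cutpoint vertex of $B_{i-1}$ adjacent to the cutpoint $v_{i-1}=w_{i-2}$ and $b$ the other non-cutpoint vertex of $B_{i-1}$, and letting $d$ be the non-cutpoint vertex of $B_{i+1}$ adjacent to the cutpoint $w_{i+1}=v_{i+2}$ and $c$ the other, one checks that $T=\{a,w_{i-1},w_i,d\}\in\CC(G)$ and that $G_{\ol{T}}$ is the disjoint union of $B_1\cup\cdots\cup B_{i-2}$, the isolated vertex $b$, the isolated vertex $c$, and $B_{i+2}\cup\cdots\cup B_l$ --- so $c(T)=4<5=|T|+1$, contradicting unmixedness via Lemma~\ref{lem:unmixwd}. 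Hence $B_i\in\{C_3,C_4\}$. Finally, if $B_i=C_3$ with $3\le i\le l-2$, then $B_{i+1}\ne K_2$ (just shown, and anyway $B_{l-1}=C_4$), $B_{i+1}\ne C_3$ (otherwise two consecutive complete blocks would make $G$ decomposable), and $B_{i+1}=C_4$ when $i+1=l-1$ by~(ii); so $B_{i+1}=C_4$ in every case, which is condition~(iii). This finishes $(2)\Rightarrow(3)$.

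The step I expect to cost the most work is the exclusion of an interior $K_2$-block in $(2)\Rightarrow(3)$: one must pin down the witnessing set $T$ and then carry out the routine but somewhat lengthy block-by-block verification that each of $a,w_{i-1},w_i,d$ is still a cutpoint after deleting the other three, and that $G_{\ol{T}}$ splits into exactly four components --- one more than unmixedness tolerates. The remaining ingredients are either classical (Cohen--Macaulay implies unmixed; Theorems~\ref{Th:EHH} and~\ref{lem:pathcm}) or are short combinatorial facts about free vertices, block decompositions, and the path structure of $B(G)$.
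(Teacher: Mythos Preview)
Your proof is correct and follows the same cycle $(1)\Rightarrow(2)\Rightarrow(3)\Rightarrow(1)$ as the paper, invoking the same ingredients (Propositions~\ref{the:C3AndC4Only} and~\ref{Pro:C4} to restrict the blocks, indecomposability to forbid two consecutive complete blocks, and Theorem~\ref{lem:pathcm} for $(3)\Rightarrow(1)$). The one substantive difference is in the exclusion of an interior $K_2$: the paper simply asserts that a $K_2$ sandwiched between two $C_4$'s is not unmixed and points to Remark~4.7 of \cite{BMS}, whereas you exhibit the explicit cutset $T=\{a,w_{i-1},w_i,d\}$ with $|T|=4$ and $c(T)=4$ and verify it directly. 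Your argument is therefore self-contained at this step, at the cost of the case analysis you anticipated; the paper's version is shorter but depends on the external reference.
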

\begin{proof} $(1)\Rightarrow (2)$. It is a known fact. $(2)\Rightarrow (3)$. By Proposition \ref{the:C3AndC4Only} only $K_2$, $C_3$ and $C_4$ are admissible blocks. Since every vertex of a complete graph is a free vertex, we observe that every cactus graph whose blocks are $K_2$ and $C_3$ and not $C_4$ is Cohen-Macaulay and is decomposable in single blocks. Now suppose that the block graph is a path containing one or more blocks that are $C_4$. By Proposition \ref{Pro:C4} a block $C_4$ has two cutpoints, thus neither $B_1$ nor $B_l$ is $C_4$. Since by hypothesis $G$ is indecomposable, two complete graphs cannot be adjacent in $B(G)$. We end observing that if a $K_2$ is between two cycles $C_4$ it is not unmixed (see also Remark 4.7 of \cite{BMS}). 
 $(3)\Rightarrow (1)$. The implication follows applying Theorem \ref{lem:pathcm}. 
\end{proof}

\begin{example}
Thanks to Lemma \ref{lem:indecomposablepathcm} we obtain the Cohen-Macaulay indecomposable graph of trees and unicyclic graphs  shown in Figure \ref{CM-TreeUni}. The bicyclic with the same properties are in Figure \ref{CM-cactus}. From now on we underline the free vertices by a circle around them.

\begin{figure}
\begin{center}
\setlength{\unitlength}{.3cm}
\begin{picture}(24,8)

\put(00,04){\circle*{.2}}
\put(00,04){\circle{.5}}

\put(04,04){\circle*{.2}}
\put(04,04){\circle{.5}}

\put(00,04){\line(1,0){4}}

\put(08,04){\circle*{.2}}
\put(10,07){\circle*{.2}}
\put(12,04){\circle*{.2}}
\put(08,04){\circle{.5}}
\put(10,07){\circle{.5}}
\put(12,04){\circle{.5}}

\put(08,04){\line(2,3){2}}
\put(08,04){\line(1,0){4}}
\put(10,07){\line(2,-3){2}}

\put(16,04){\circle*{.2}}
\put(14,07){\circle*{.2}}
\put(14,07){\circle{.5}}

\put(16,04){\line(-2,3){2}}

\put(16,00){\circle*{.2}}
\put(20,00){\circle*{.2}}
\put(20,04){\circle*{.2}}

\put(16,04){\line(1,0){4}}
\put(16,00){\line(1,0){4}}
\put(16,04){\line(0,-1){4}}
\put(20,04){\line(0,-1){4}}

\put(20,04){\line(2,3){2}}
\put(22,07){\circle*{.2}}
\put(22,07){\circle{.5}}

\end{picture}
 
\end{center}
\caption{Tree and unicyclic Cohen-Macaulay indecomposable graphs.}\label{CM-TreeUni} 
\end{figure}
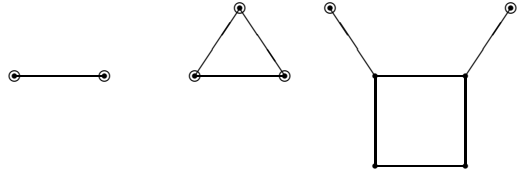

\begin{figure}
\begin{center}
\setlength{\unitlength}{.3cm}
\begin{picture}(24,8)

\put(00,04){\circle*{.2}}
\put(00,04){\circle{.5}}

\put(04,04){\circle*{.2}}

\put(00,04){\line(1,0){4}}

\put(04,00){\circle*{.2}}
\put(08,00){\circle*{.2}}
\put(08,04){\circle*{.2}}

\put(04,04){\line(1,0){4}}
\put(04,00){\line(1,0){4}}
\put(04,04){\line(0,-1){4}}
\put(08,04){\line(0,-1){4}}

\put(10,07){\circle*{.2}}
\put(12,04){\circle*{.2}}
\put(10,07){\circle{.5}}
\put(12,04){\circle{.5}}

\put(08,04){\line(2,3){2}}
\put(08,04){\line(1,0){4}}
\put(10,07){\line(2,-3){2}}

\put(16,04){\circle*{.2}}
\put(14,07){\circle*{.2}}
\put(14,07){\circle{.5}}

\put(16,04){\line(-2,3){2}}

\put(16,00){\circle*{.2}}
\put(20,00){\circle*{.2}}
\put(20,04){\circle*{.2}}

\put(16,04){\line(1,0){4}}
\put(16,00){\line(1,0){4}}
\put(16,04){\line(0,-1){4}}
\put(20,04){\line(0,-1){4}}

\put(20,08){\circle*{.2}}
\put(24,08){\circle*{.2}}
\put(24,04){\circle*{.2}}

\put(20,08){\line(1,0){4}}
\put(20,04){\line(1,0){4}}
\put(20,08){\line(0,-1){4}}
\put(24,08){\line(0,-1){4}}

\put(26,01){\circle*{.2}}
\put(26,01){\circle{.5}}

\put(24,04){\line(2,-3){2}}

\end{picture}
 
\end{center}
\caption{Bicyclic Cohen-Macaulay indecomposable cactus graphs.}\label{CM-cactus} 
\end{figure}
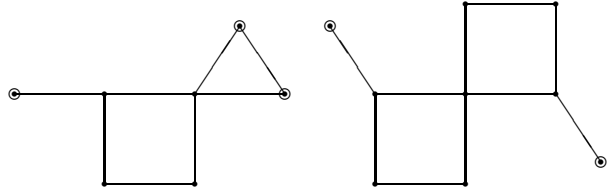

\end{example}

\begin{theorem}\label{Th:DecompositionByUndecomposablePath}
 Let $G$ be a cactus graph.  We use notation \eqref{pathnotation}. The following conditions are equivalent
\begin{enumerate}
 \item $J(G)$ is Cohen-Macaulay;
 \item $J(G)$ is unmixed;
 \item $B(G)$ is a tree, $G$ is decomposable into indecomposable graphs $G_1,\ldots, G_r$, and such that $B(G_i)$ is a path and $G_i$ satisfies one of the equivalent conditions of Lemma \ref{lem:indecomposablepathcm} for  $1\leq i\leq r$.
\end{enumerate}
\end{theorem}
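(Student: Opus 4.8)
The plan is to prove the cycle of implications $(1)\Rightarrow(2)\Rightarrow(3)\Rightarrow(1)$, reducing everything to the building blocks already established. The implication $(1)\Rightarrow(2)$ is standard, since a Cohen--Macaulay ideal is always unmixed (this is where one cites the general commutative-algebra fact, nothing graph-theoretic is needed). For $(2)\Rightarrow(3)$, I would start from Proposition \ref{tree}: if $J(G)$ is unmixed then $B(G)$ is a tree. Then I invoke the canonical decomposition of Definition \ref{def:indecomposable}, writing $G = G_1\cup\cdots\cup G_r$ into indecomposable pieces glued along free vertices, which exists and is unique for any graph. By repeated application of Lemma \ref{The:free}, $J(G)$ unmixed forces each $J(G_i)$ unmixed. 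It remains to see that each $G_i$ has $B(G_i)$ a path: since $G$ is a cactus each block is a cycle or an edge, so the same is true of each $G_i$; $B(G_i)$ is a tree by Proposition \ref{tree} again; and I claim an \emph{indecomposable} cactus whose block graph is a tree but not a path would contain a cutpoint lying in three or more blocks, around which one can split off a free vertex (each block being $K_2$, $C_3$, or, by Propositions \ref{the:C3AndC4Only} and \ref{Pro:C4}, a $C_4$ with two adjacent cutpoints), contradicting indecomposability — so $B(G_i)$ is a path. Finally, each such $G_i$ is an indecomposable cactus with $B(G_i)$ a path and $J(G_i)$ unmixed, so Lemma \ref{lem:indecomposablepathcm} (equivalence $(2)\Leftrightarrow(3)$) gives exactly condition (3).

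For $(3)\Rightarrow(1)$, assume $G$ decomposes into indecomposable pieces $G_1,\ldots,G_r$, each satisfying the conditions of Lemma \ref{lem:indecomposablepathcm}. By that lemma (implication $(3)\Rightarrow(1)$, itself resting on Theorem \ref{lem:pathcm}), each $J(G_i)$ is Cohen--Macaulay. Now I reassemble: the decomposition \eqref{eq:decr} is obtained by a finite sequence of gluings along single free vertices, so applying Lemma \ref{The:free} repeatedly (at each step the two pieces being glued meet in one vertex that is free in the clique complex of each), I conclude $J(G)$ is Cohen--Macaulay. One should note here that if $G$ is disconnected the same argument runs componentwise, using the remark at the start of Section \ref{sec:Cactus} that Cohen--Macaulayness of a binomial edge ideal is equivalent to Cohen--Macaulayness on each connected component, together with the hypothesis that $B(G)$ is a forest.

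The main obstacle I expect is the structural claim inside $(2)\Rightarrow(3)$ that an indecomposable cactus with unmixed binomial edge ideal has block graph a \emph{path} rather than a more general tree. The argument is: if $B(G_i)$ were a tree with a vertex of degree $\geq 3$, it corresponds to a block $B$ sharing cutpoints with at least three other blocks; one then shows some vertex of $G_i$ is a free vertex of its clique complex in a way that yields a genuine decomposition. This requires a careful case analysis of what $B$ can be — a complete graph (so $K_2$ or $C_3$, any non-cutpoint vertex being free) or a $C_4$ satisfying the $(C4)$-condition (only its two non-cutpoint vertices are candidates, and one checks a third block attachment forces a splittable free vertex) — using Propositions \ref{the:C3AndC4Only} and \ref{Pro:C4} to limit the blocks, and the observation in the proof of Lemma \ref{lem:indecomposablepathcm} that a $K_2$ between two $C_4$'s is not unmixed. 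This is essentially bookkeeping with the block-tree structure, but it is the only place where a nontrivial new verification is needed; everything else is an assembly of prior results.
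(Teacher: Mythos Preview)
Your overall architecture is the same as the paper's: $(1)\Rightarrow(2)$ is general, $(3)\Rightarrow(1)$ is Lemma~\ref{The:free} applied repeatedly to the pieces certified Cohen--Macaulay by Lemma~\ref{lem:indecomposablepathcm}, and the content is in $(2)\Rightarrow(3)$. The paper also reduces $(2)\Rightarrow(3)$ to the claim that an unmixed indecomposable cactus has $B(G_i)$ a path.

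The gap is in how you plan to prove that claim. You assert that if $B(G_i)$ has a vertex of degree $\geq 3$ then ``one can split off a free vertex,'' contradicting indecomposability. This is false. Under unmixedness each cutpoint lies in exactly two blocks (Proposition~\ref{tree}), so a block of degree $\geq 3$ in $B(G_i)$ has $\geq 3$ distinct cutpoints; by Propositions~\ref{the:C3AndC4Only} and~\ref{Pro:C4} it must then be a $C_3$ with all three vertices cutpoints (a $C_4$ under the $(C4)$-condition has only two). If each of those three cutpoints is attached to a $C_4$, no vertex of $G_i$ is free in the sense of Definition~\ref{def:indecomposable}: the $C_3$ vertices sit in the triangle and in two edges of the adjacent $C_4$, and every vertex of a $C_4$ lies in two edge-cliques. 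So this configuration is genuinely indecomposable, and your decomposability argument cannot rule it out. The observation about a $K_2$ between two $C_4$'s does not transfer either: here the middle block is a $C_3$, not a $K_2$, and a cutset built from only two of the three surrounding $C_4$'s still satisfies $c(T)=|T|+1$.

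What the paper does at this point---and what you are missing---is a direct unmixedness contradiction: take $T$ to be the six vertices consisting of the three $C_3$-vertices together with, in each adjacent $C_4$, the vertex opposite the $C_3$-cutpoint. One checks $T\in\CC(G_i)$ and $c(T)=6\neq |T|+1=7$, so $J(G_i)$ is not unmixed. With this one extra cutset computation (the paper's Figure~\ref{Non-unmixed-cactus}), your plan goes through; without it, the argument for $(2)\Rightarrow(3)$ is incomplete.
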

\begin{proof}
 $(1)\Rightarrow (2)$. It is a known fact. $(3)\Rightarrow (1)$. It follows by  Lemma \ref{The:free}. 
 $(2)\Rightarrow (3)$. By Proposition \ref{tree} the block graph of $G$ is a tree. Let $t$ be the number of vertices of $B(G)$ having degree greater than two. We make induction on $t$. If $t=0$ then the block graph $B(G)$ is a path and the assertion follows by Lemma \ref{The:free}  and Lemma \ref{lem:indecomposablepathcm}.
 Let $t>0$ and let $B$ be a a vertex  of  $B(G)$ whose degree is greater than $2$. By Proposition \ref{the:C3AndC4Only} and Proposition \ref{Pro:C4}, $B=C_3$. Suppose that $C_3$ has a vertex $v$ such that $\{v\}=V(G_1)\cap V(G_2)$,  $G=G_1\cup G_2$ and $v$ is a free vertex of $\Delta(G_1)$ and $\Delta(G_2)$. Since $G_1$ and $G_2$ are cactus with number of blocks of degree greater than $2$  less than $t$, by induction hypothesis we are done. Suppose by contradiction that in each of the $3$ vertices of $C_3$ the graph $G$ is not decomposable. Then $C_3$ is adjacent to $3$ blocks $C_4$ as in figure \ref{Non-unmixed-cactus}, where $G_1$, $G_2$ and $G_3$ are intended as cactus subgraphs of $G$. We observe that the set $T$ containing the $6$ vertices, indicated as filled dots in figure, is a cutset. But $G_{\ol{T}}$ has $6$ components. Hence $J(G)$ is not unmixed. Contradiction.   
\begin{figure}
\begin{center}
\setlength{\unitlength}{.3cm}
\begin{picture}(20,12)
 
\newsavebox{\subgraph}
\savebox{\subgraph}(12,8)[bl]{
  \qbezier(01,00)(00,00)(00,01)
  \put(01,00){\line(1,0){2}}
  \qbezier(03,00)(04,00)(04,01)
  \put(04,01){\line(0,1){2}}
  \qbezier(04,03)(04,04)(03,04)
  \put(01,04){\line(1,0){2}}
  \qbezier(01,04)(00,04)(00,03)
  \put(00,01){\line(0,1){2}}
}

\put(00,02){\usebox{\subgraph}}
\put(01.5,03.6){$G_1$}

\put(16,02){\usebox{\subgraph}}
\put(17.5,03.6){$G_2$}

\put(06,09){\usebox{\subgraph}}
\put(7.5,10.6){$G_3$}


\put(04,04){\circle*{.2}}


\put(04,00){\circle*{.5}}
\put(08,00){\circle*{.2}}
\put(08,04){\circle*{.5}}

\put(04,04){\line(1,0){4}}
\put(04,00){\line(1,0){4}}
\put(04,04){\line(0,-1){4}}
\put(08,04){\line(0,-1){4}}

\put(10,07){\circle*{.5}}
\put(12,04){\circle*{.5}}

\put(08,04){\line(2,3){2}}
\put(08,04){\line(1,0){4}}
\put(10,07){\line(2,-3){2}}

\put(12,00){\circle*{.2}}
\put(16,00){\circle*{.5}}
\put(16,04){\circle*{.2}}

\put(12,04){\line(1,0){4}}
\put(12,00){\line(1,0){4}}
\put(12,04){\line(0,-1){4}}
\put(16,04){\line(0,-1){4}}


\put(10,11){\circle*{.2}}
\put(14,07){\circle*{.2}}
\put(14,11){\circle*{.5}}

\put(10,07){\line(1,0){4}}
\put(10,11){\line(1,0){4}}
\put(10,11){\line(0,-1){4}}
\put(14,11){\line(0,-1){4}}

\end{picture}
\end{center}
\caption{A non unmixed cactus graphs with $B(G)$ that is a tree.}\label{Non-unmixed-cactus}  
\end{figure}
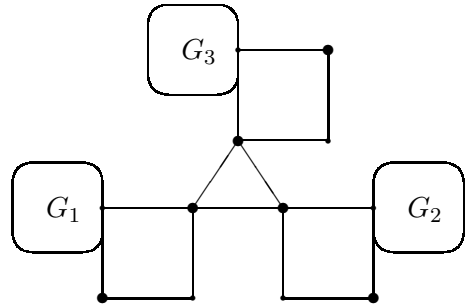
\end{proof}
\begin{example}
In figure \ref{CM-cactus-decomposable} we have that $G$ is the union of $4$ indecomposable Cohen-Macaulay cactus graphs joined by free vertices (surrounded by a circle).  That are : a $K_2$; a $C_3$; one containing a $K_2$, a $C_3$ and a $C_4$; one containing $2$ $K_2$, one $C_3$ and  $3$ $C_4$.  
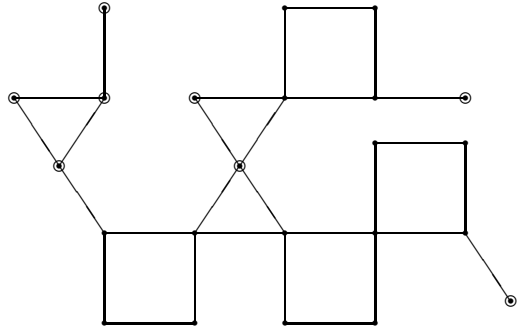
\begin{figure}
\begin{center}
\setlength{\unitlength}{.3cm}
\begin{picture}(24,14)

\put(04,04){\circle*{.2}}
\put(02,07){\circle{.5}}
\put(02,07){\circle*{.2}}


\put(02,07){\line(2,-3){2}}

\put(00,10){\circle*{.2}}
\put(00,10){\circle{.5}}
\put(04,10){\circle*{.2}}
\put(04,10){\circle{.5}}

\put(00,10){\line(2,-3){2}}
\put(00,10){\line(1,0){4}}
\put(02,07){\line(2,3){2}}


\put(04,14){\circle*{.2}}
\put(04,14){\circle{.5}}
\put(04,10){\line(0,1){4}}

\put(04,00){\circle*{.2}}
\put(08,00){\circle*{.2}}
\put(08,04){\circle*{.2}}

\put(04,04){\line(1,0){4}}
\put(04,00){\line(1,0){4}}
\put(04,04){\line(0,-1){4}}
\put(08,04){\line(0,-1){4}}

\put(10,07){\circle*{.2}}
\put(12,04){\circle*{.2}}
\put(10,07){\circle{.5}}

\put(08,04){\line(2,3){2}}
\put(08,04){\line(1,0){4}}
\put(10,07){\line(2,-3){2}}

\put(08,10){\circle*{.2}}
\put(08,10){\circle{.5}}

\put(12,10){\circle*{.2}}

\put(08,10){\line(2,-3){2}}
\put(08,10){\line(1,0){4}}
\put(10,07){\line(2,3){2}}

\put(12,14){\circle*{.2}}
\put(16,14){\circle*{.2}}
\put(16,10){\circle*{.2}}

\put(12,10){\line(1,0){4}}
\put(12,14){\line(1,0){4}}
\put(12,10){\line(0,1){4}}
\put(16,14){\line(0,-1){4}}

\put(20,10){\circle*{.2}}
\put(20,10){\circle{.5}}

\put(16,10){\line(1,0){4}}

\put(12,00){\circle*{.2}}
\put(16,00){\circle*{.2}}
\put(16,04){\circle*{.2}}

\put(12,04){\line(1,0){4}}
\put(12,00){\line(1,0){4}}
\put(12,04){\line(0,-1){4}}
\put(16,04){\line(0,-1){4}}

\put(16,08){\circle*{.2}}
\put(20,08){\circle*{.2}}
\put(20,04){\circle*{.2}}

\put(16,08){\line(1,0){4}}
\put(16,04){\line(1,0){4}}
\put(16,08){\line(0,-1){4}}
\put(20,08){\line(0,-1){4}}

\put(22,01){\circle*{.2}}
\put(22,01){\circle{.5}}

\put(20,04){\line(2,-3){2}}
\end{picture}
\end{center}
\caption{A Cohen-Macaulay cactus graphs.}\label{CM-cactus-decomposable} 
\end{figure}
\end{example}
\section{Classification of Cohen--Macaulay bicyclic graphs}\label{sec:Bicycle}
In this section thanks to the classification of Cohen-Macaulay binomial edge ideals of cactus graphs we classify the ones that are bicyclic, namely the ideals of deviation $2$.
In fact a cactus graph having $2$ cycles as blocks is in particular a bicyclic graph. Hence we focus our attention on bicyclic graphs $G$ that are not cactus. In this case there exists one block $B_1$ in $G$ such that
\begin{equation}\label{eq:B1}
 B_1=\bigcup_{i=1}^3 P_i
\end{equation}
where $P_1$, $P_2$ and $P_3$ are paths, $V(P_i)\cap V(P_j)=\{a,b\}$ for  $1\leq i<j\leq 3$ and if $B$ is a block of $G$ with $B\neq B_1$ then $B$ is an edge.
\begin{remark}\label{rem:l123}
 The set $T=\{a,b\}$, where $a$ and $b$ are defined in \eqref{eq:B1}, is a cutset. Assume from now on that the ideal $J(G)$ is unmixed. Let $l_i$ be the length of the path $P_i$ in \eqref{eq:B1} such that
 \[
  1\leq l_1\leq l_2\leq l_3.
 \]
 If $l_1=1$, that is $\{a,b\}\in E(G)$, then $a$ is a cutpoint and $b$ is not a cutpoint. In fact by Lemma \ref{lem:unmixwd}, $c(T)=3$. But we have exactly two connected components induced by the paths $P_2$ and $P_3$. Hence there exists another connected component in $G_{\ol{T}}$ that is not a subgraph of $B_1$ and the assertion follows. By a similar argument if $l_1>1$ then neither $a$ nor $b$ is a cutpoint.
\end{remark}
All over the section we use the notation defined in \eqref{eq:B1} and Remark \ref{rem:l123}. We call these graphs \textit{non-cactus} bicyclic graphs.
\begin{lemma}\label{lem:length} 
 Let $G$ be a non-cactus bicyclic graph such that $J(G)$ is unmixed. We use notation \eqref{eq:B1}. Then
 \begin{enumerate}
  \item each path $P_i$ has length less than $4$;
  \item at most one path  $P_i$ has length $3$. 
 \end{enumerate}
\end{lemma}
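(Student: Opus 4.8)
The idea is to produce, for a path of the ``wrong'' length, an explicit cutset $T$ whose induced subgraph $G_{\ol T}$ has too many connected components, contradicting Lemma \ref{lem:unmixwd}. Throughout we use notation \eqref{eq:B1} and Remark \ref{rem:l123}, so $B_1 = P_1 \cup P_2 \cup P_3$ with $V(P_i)\cap V(P_j)=\{a,b\}$ and $1\le l_1\le l_2\le l_3$. Write the internal vertices of $P_i$ as $a = p_{i,0}, p_{i,1}, \ldots, p_{i,l_i} = b$.

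\textbf{Step 1 (length bound, part (1)).} Suppose some path has length $\ge 4$; then $l_3\ge 4$. I would consider the cutset obtained by deleting, from the cycle $P_1\cup P_3$ (a cycle of length $l_1+l_3\ge 5$), two non-adjacent internal vertices of $P_3$ that are at distance $\ge 2$ from each other and from $a,b$ along the cycle --- concretely $T=\{p_{3,2}, p_{3,l_3-2}\}$ (or an adjacent variant if $l_3$ is small). One checks this is a cutset by Lemma \ref{lem:cutpoint}: each vertex of $T$ becomes a cutpoint once the other vertex of $T$ is removed, because a vertex of degree $2$ on a cycle is a cutpoint of the path that remains after breaking the cycle elsewhere. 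But deleting these two vertices from $B_1$ splits the cycle into several arcs --- the arc from $a$ to $b$ through $P_2$, and the two or three arc-segments of $P_3$ between the deleted vertices and $a,b$ --- and, since by Remark \ref{rem:l123} neither $a$ nor $b$ is a cutpoint when $l_1>1$ (and exactly $a$ is when $l_1=1$), the pendant blocks attached at $a$ and $b$ merge with adjacent arcs in a controlled way. Counting carefully one finds $c(T)\ge |T|+2$, contradicting Lemma \ref{lem:unmixwd}. This is essentially the same mechanism as in Proposition \ref{the:C3AndC4Only}, applied to the cycle $P_1\cup P_3$ rather than to a block cycle.

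\textbf{Step 2 (at most one long path, part (2)).} Now assume both $l_2=3$ and $l_3=3$ (by part (1) lengths are $\le 3$, and if $l_2\ge 3$ then so is $l_3$). Here I would take $T = \{p_{2,1}, p_{3,1}\}$ (or, depending on whether $l_1=1$, a slight adjustment including or excluding $a$). Removing $p_{2,1}$ and $p_{3,1}$ from $B_1$ disconnects $P_2$ and $P_3$ into their ``$a$-side'' and ``$b$-side'' fragments; together with $P_1$ joining $a$ to $b$, the graph $G_{\ol T}$ then has the $a$-side fragments of $P_2,P_3$ (each a single internal vertex, so isolated after removal), the $b$-side fragments likewise, and the component through $P_1$ --- giving at least four components on two removed vertices. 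One must verify $T$ is genuinely a cutset via Lemma \ref{lem:cutpoint} (again each $p_{i,1}$ is a cutpoint of $B_1$ minus the other, since with one of the three parallel paths' interior removed the remaining graph is a ``theta minus an edge'' in which the surviving middle vertex of the other long path is a genuine cutpoint), and then $c(T)\ge |T|+2$ contradicts Lemma \ref{lem:unmixwd}.

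\textbf{Main obstacle.} The delicate point in both steps is not finding the candidate cutsets but bookkeeping the effect of the pendant edges attached at $a$ and $b$ and at the internal path-vertices, and handling the boundary cases where $l_1=1$ (so $\{a,b\}$ is an edge and, by Remark \ref{rem:l123}, $a$ is forced to be a cutpoint) or where a chosen ``distance-$2$'' vertex coincides with or is adjacent to $a$ or $b$ because the relevant path is exactly length $3$ or $4$. I expect the cleanest write-up to first reduce to the case where $G=B_1$ has no pendant edges (pendant edges only increase $c(T)$, never decrease it, so they can only help), and then to split into the few residual cases on $(l_1,l_2,l_3)$ and verify the component count directly, exactly in the style of the proof of Proposition \ref{the:C3AndC4Only} and Proposition \ref{Pro:C4}.
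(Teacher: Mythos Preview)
Your proposal has a genuine gap in both steps, and it comes from the same oversight: you are treating $P_1\cup P_3$ as if it were a block cycle, but the third path $P_2$ (and $P_1$) keeps $a$ and $b$ connected after your deletions, collapsing the pieces you count as separate.

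Concretely, in Step~1 take any two internal vertices $u,v$ of $P_3$ (your $p_{3,2},p_{3,l_3-2}$). In $B_1\setminus\{u,v\}$ the end--segments of $P_3$ adjacent to $a$ and to $b$ both lie in the large component containing $a$, $b$, $P_1$ and $P_2$; only the middle arc of $P_3$ (if nonempty) is separate. Hence $c(T)\le 2$ inside $B_1$, not $c(T)\ge |T|+2$. The analogy with Proposition~\ref{the:C3AndC4Only} fails precisely because there the cycle is a block, whereas here $P_1\cup P_3$ is not: the extra path $P_2$ supplies the missing $a$--$b$ connection. In Step~2 the situation is worse: $T=\{p_{2,1},p_{3,1}\}$ is not a cutset of $B_1$ at all. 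After removing $p_{3,1}$, the vertex $p_{2,1}$ is \emph{not} a cutpoint, since its two neighbours $a$ and $p_{2,2}$ remain connected through $P_1$ and $b$; so the verification you sketch (``the surviving middle vertex \ldots\ is a genuine cutpoint'') is incorrect. Finally, the proposed reduction to $G=B_1$ is unsound here: pendants can raise $c(T)$ from $2$ to exactly $|T|+1$, turning a would-be contradiction into a non-contradiction, so you cannot argue on $B_1$ alone.

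What the paper does---and what your plan is missing---is a two-step bootstrap. First one uses unmixedness on a carefully chosen $2$-element cutset (e.g.\ $\{a,b_1\}$ with $b_1$ the neighbour of $b$ on the long path) which has only two components inside $B_1$; unmixedness then \emph{forces} a pendant edge at $b_1$, and symmetrically at $a_1$. Only after these pendants are known to exist does the cutset $\{a_1,b_1\}$ yield four components (the main piece through $a,b$, the middle arc of $P$, and the two pendants), contradicting $c(T)=|T|+1$. Part~(2) follows the same pattern: cutsets $\{b,a_1\}$ and $\{b,a_2\}$ force pendants at $a_1,a_2$, and then $\{b,a_1,a_2\}$ gives five components on three deleted vertices. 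Your direct attack skips the forcing step, and without it neither the cutset property nor the component count can be established.
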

\begin{proof}
  $(1)$. Suppose by contradiction that exists a path $P\in \{P_1,P_2,P_3\}$ of length $l\geq 4$. Let $a$, $a_1$, $c$, $b_1$ and $b$, 5 distinct vertices of $V(P)$,
 with $\{a,a_1\},\{b,b_1\}\in E(P)$. Let $T=\{a,b\}$ and suppose that neither $a$ nor $b$ is a cutpoint. We observe that $T_1=\{a,b_1\}$ is a cutset. Hence $c(T_1)=3$. But in $G_{\ol{T}_1}$ there are two connected components that are subgraphs of $B_1$, one containing the vertex $c$ and one containing the vertex $b$.  Hence there exists another block $B_2$ different from $B_1$ that contains the vertex $b_1$. Let $E(B_2)=\{\{b_1,b_1'\}\}$. By the same argument there exists a block $B_3$ such that $E(B_3)=\{\{a_1,a_1'\}\}$. The set $T_2=\{a_1,b_1\}$ is a cutset and induces $4$ components in $G_{\ol{T}_2}$: The first containing $a$ and $b$, the second containing $a_1'$, the third containing $b_1'$, and the fourth containing $c$. Contradiction. By similar argument is left to the reader to check that if $a$ cutpoint and $b$ is not a cutpoint, then $T_1=\{b,c\}$ is a cutset with $c$ a cutpoint. Hence we have a contradiction focusing on the cutset $\{a,c\}$.
 
  $(2)$. Suppose by contradiction that exist $2$ paths $P_1$ and $P_2$ of length $3$ such that
 \[
  V(P_i)=\{a,a_i,b_i,b\},\, E(P_i)=\{\{a,a_i\},\{a_i,b_i\},\{b_i,b\}\}\mbox{ for }i=1,2. 
 \]
  By Remark \ref{rem:l123}  we assume the vertex $b$ is not a cutpoint and we focus on the cutset $T=\{b,a_1\}$. $G_{\ol{T}}$ has two connected components that are subgraphs of $B_1$, one containing the vertex $a$ and one containing the isolated vertex $b_1$.  Hence there exists another block $B_2$ different from $B_1$ that contains the vertex $a_1$. Let $E(B_2)=\{\{a_1,a_1'\}\}$. By the same argument there exists a block $B_3$ such that $E(B_3)=\{\{a_2,a_2'\}\}$. The set $T_1=\{b,a_1,a_2\}$ is a cutset with $5$ components in $G_{\ol{T}_1}$: the first containing $a$, the second (respectively the third)  containing $a_1'$ (respectively $a_2'$)  and the $2$ isolated vertices $b_1$ and $b_2$. Contradiction.
 \end{proof}

\begin{lemma}
Let $G$ be a non-cactus bicyclic graph. Then $J(G)$ is Cohen-Macaulay (respectively unmixed but not Cohen-Macaulay) if and only if $B(G)$ is a tree and $G$ is decomposable into indecomposable graphs $G_1,\ldots, G_r$, and such that $G_1$ is one of the graphs in Figure \ref{CM-noncactus} (respectively Figure \ref{Unmixed-noncactus}).
\end{lemma}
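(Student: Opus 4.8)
The plan is to reduce the classification of $J(G)$ to the analysis of a single indecomposable piece, exactly as in Section~\ref{sec:Cactus}, and then to run a finite case analysis governed by Lemma~\ref{lem:length}. The implication $(1)\Rightarrow(2)$ is standard. For the converse, assume $J(G)$ is unmixed. By Proposition~\ref{tree} the block graph $B(G)$ is a tree, so $G$ decomposes into indecomposable graphs $G_1,\dots,G_r$ in the sense of Definition~\ref{def:indecomposable}, and by Lemma~\ref{The:free} the ideal $J(G)$ is Cohen--Macaulay (resp. unmixed) if and only if each $J(G_i)$ is. Since $G$ is a \textit{non-cactus} bicyclic graph, exactly one indecomposable component, say $G_1$, contains the unique block $B_1$ of \eqref{eq:B1} that is neither complete nor a cycle; every other $G_i$ is an indecomposable cactus graph, already classified by Theorem~\ref{Th:DecompositionByUndecomposablePath} (and Cohen--Macaulay as soon as it is unmixed). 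Hence the only task left is to determine the indecomposable graphs $G_1$ containing $B_1$ for which $J(G_1)$ is unmixed, and among those to split off the Cohen--Macaulay ones.

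\emph{Case analysis for $G_1$.} Write $l_1\le l_2\le l_3$ for the lengths of the paths $P_1,P_2,P_3$ forming $B_1$. By Lemma~\ref{lem:length} one has $l_3\le 3$ and at most one $l_i$ equals $3$, so $(l_1,l_2,l_3)$ lies in the explicit list $(1,1,1),(1,1,2),(1,1,3),(1,2,2),(1,2,3),(2,2,2),(2,2,3)$. For each such triple one attaches pendant edges (the only admissible extra blocks) to the vertices of $B_1$; indecomposability of $G_1$ severely constrains which vertices may carry edges and which interior vertices of $B_1$ may fail to be cutpoints. Using Remark~\ref{rem:l123} to locate the cutpoints inside $B_1$, one computes $\CC(G_1)$ explicitly and applies Lemma~\ref{lem:unmixwd}: a configuration survives precisely when $c(T)=|T|+1$ for every $T\in\CC(G_1)$. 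This discards all but finitely many graphs and produces exactly the lists drawn in Figures~\ref{CM-noncactus} and~\ref{Unmixed-noncactus} as the unmixed indecomposable $G_1$'s; the split between the two figures is the content of the last step.

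\emph{The Cohen--Macaulay dichotomy.} For the graphs in Figure~\ref{CM-noncactus} I would prove Cohen--Macaulayness by the method of Theorem~\ref{lem:pathcm}: choose a vertex $v$ of $B_1$ that is not a free vertex, pass to $G_{\ol{v}}$ as in Corollary~\ref{Lem:cutset} (so that $B_1$ is progressively completed, ultimately to $K_4$, and a $\theta$-type block becomes a ``book'' of triangles), and feed $J(G_1)=J(G_{\ol{v}})\cap Q_v$ into
\[
0\longrightarrow S/J(G_1)\longrightarrow S/Q_v\oplus S/J(G_{\ol{v}})\longrightarrow S/(Q_v+J(G_{\ol{v}}))\longrightarrow 0 .
\]
Here $S/J(G_{\ol{v}})$ is Cohen--Macaulay by Theorem~\ref{Th:EHH} or by the already-settled cactus case, while $S/Q_v$ and $S/(Q_v+J(G_{\ol{v}}))$ should be tensor products of binomial edge ideal rings of smaller complete or cactus graphs, hence Cohen--Macaulay of depth $n+1$ and $n$; the Depth Lemma then gives $\depth S/J(G_1)\ge n+1=\dim S/J(G_1)$. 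For the graphs in Figure~\ref{Unmixed-noncactus} I would confirm unmixedness from the cutset description and then show $\depth S/J(G_1)=n<n+1$, e.g. by exhibiting a non-vanishing higher local cohomology module or a syzygy obstruction analogous to the known bad configurations, checking this on the minimal such $G_1$ and propagating along the decomposition. \textbf{Main obstacle.} The genuinely new work is the Cohen--Macaulay verification at the non-complete block $B_1$: unlike in Theorem~\ref{lem:pathcm}, the ideal $Q_v$ need not factor cleanly (compare the failure for $C_5$ in Example~\ref{Ex:C4DK4}), so one must choose the completing vertices $v$ in the correct order and verify that each intermediate $Q_v$ and $Q_v+J(G_{\ol{v}})$ really is a tensor product of rings already understood.
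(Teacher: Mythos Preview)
Your overall architecture matches the paper: reduce to the indecomposable piece containing $B_1$ via Proposition~\ref{tree} and Lemma~\ref{The:free}, run the finite case analysis on $(l_1,l_2,l_3)$ permitted by Lemma~\ref{lem:length}, and then split Cohen--Macaulay from merely unmixed. Two small corrections and two differences in execution are worth flagging.

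First, the triples $(1,1,\ast)$ do not occur: $l_1=l_2=1$ would force two distinct length-one paths between $a$ and $b$, i.e.\ a multiple edge, contradicting simplicity of $G$. The paper's list is exactly the four cases $(1,2,2),(1,2,3),(2,2,2),(2,2,3)$.

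Second, for $(1,2,2)$ the paper does not run the $G_{\ol v}/Q_v$ exact-sequence machine; it observes directly that $G_1$ is the cone from $a$ over the disjoint union of the path $f_1\!-\!b\!-\!f_2$ and the isolated vertex $f_3$, and invokes \cite[Theorem~3.8]{RR}. This is shorter than your proposed completion-to-$K_4$ route (which would also work). For $(1,2,3)$ the paper proceeds exactly as you outline, applying Corollary~\ref{Lem:cutset} at the cutpoint $a$ and the Depth Lemma on the resulting short exact sequence.

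The one place where your proposal is genuinely thinner than the paper is the non-Cohen--Macaulay half. For $(2,2,2)$ and $(2,2,3)$ you propose to ``exhibit a non-vanishing higher local cohomology module or a syzygy obstruction,'' but give no mechanism. The paper does not prove this ab initio either: it identifies these as the graphs of \cite[Examples~3.2--3.3]{Ri} (verified by computation) and, for the bipartite one, points to the argument in \cite{BMS}. If you want a self-contained proof you will need to supply an actual depth computation for these two specific graphs; the exact-sequence/completion technique you describe establishes lower bounds on depth, not the strict upper bound you need here, so a different tool (or the cited references) is required.
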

\begin{proof}
 Let $(l_1,l_2,l_3)\in \NN^3$ where  $l_i$ is the length of the paths $P_i$. By Remark \ref{rem:l123} and Lemma \ref{lem:length} we have to study the following $4$ cases:
\[
 (1,2,2),(1,2,3), (2,2,2),(2,2,3). 
\]

\begin{figure}[hbt]
\begin{center}
\setlength{\unitlength}{.3cm}
\begin{picture}(28,9)



\put(04,04){\circle*{.2}}

\put(04,08){\circle*{.2}}
\put(04,08){\circle{.5}}

\put(04,04){\line(0,1){4}}

\put(04,00){\circle*{.2}}
\put(04,00){\circle{.5}}

\put(08,00){\circle*{.2}}
\put(08,04){\circle*{.2}}
\put(08,04){\circle{.5}}

\put(04,04){\line(1,0){4}}
\put(04,00){\line(1,0){4}}
\put(04,04){\line(0,-1){4}}
\put(08,04){\line(0,-1){4}}
\put(04,04){\line(1,-1){4}}

\put(18,07){\circle*{.2}}
\put(18,07){\circle{.5}}

\put(16,04){\circle*{.2}}
\put(20,04){\circle*{.2}}

\put(16,04){\line(2,3){2}}
\put(16,04){\line(1,0){4}}
\put(18,07){\line(2,-3){2}}

\put(16,00){\circle*{.2}}
\put(20,00){\circle*{.2}}
\put(20,04){\circle*{.2}}

\put(24,00){\circle*{.2}}
\put(24,04){\circle*{.2}}

\put(24,00){\circle{.5}}
\put(24,04){\circle{.5}}

\put(16,04){\line(1,0){4}}
\put(16,00){\line(1,0){4}}
\put(16,04){\line(0,-1){4}}
\put(20,04){\line(0,-1){4}}

\put(20,04){\line(1,0){4}}
\put(20,00){\line(1,0){4}}

\put(03,4){$a$}
\put(02.5,0){$f_1$}
\put(02.5,8){$f_3$}

\put(08.2,4){$f_2$}
\put(08.2,0){$b$}

\put(15,4){$b$}
\put(14.5,0){$b_1$}

\put(20.2,4.4){$a$}
\put(20.2,0.4){$a_1$}

\put(24.2,4){$f_2$}
\put(24.2,0){$f_3$}

\put(17.5,07.5){$f_1$}

\end{picture}
\caption{Bicyclic Cohen-Macaulay non-cactus graphs.}\label{CM-noncactus} 
\end{center}

\end{figure}
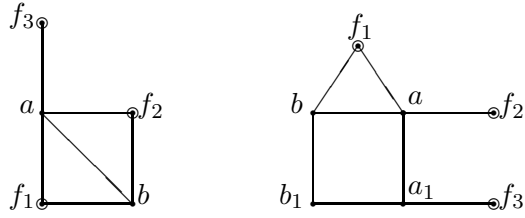

Case $(1,2,2)$. Since $l_1=1$  we obtain the graph on the left in Figure \ref{CM-noncactus} that is Cohen-Macaulay. In fact, if we consider the cone from the vertex $a$ to the two connected components given by the isolated vertex $f_3$ and the path whose edges are $\{f_1,b\}$ and $\{b,f_2\}$, by Theorem 3.8 of \cite{RR}, we have the assertion.

Case $(1,2,3)$. Since $l_1=1$ we obtain the graph on the right in figure \ref{CM-noncactus} removing the vertex  $f_3$. Using similar argument to the one used in Proposition \ref{Pro:C4}, we obtain that the only candidate is exactly the graph in figure \ref{CM-noncactus} and
\[
 \mathcal{C}(G)=\{\emptyset, \{a\},\{a_1\},\{a,a_1\}, \{a,b\}, \{a,b_1\}, \{a_1,b\},\{a,a_1,b\}\}.
\]
We focus on the cutpoint $a$. By Corollary \ref{Lem:cutset} we obtain $J(G)=J(G_{\ol{a}}) \cap Q_a$, moreover $Q_a=(x_a,y_a)+ J(P)$ where $J(P)$ is the binomial edge ideal of the path with
\[
 E(P)=\{\{f_1,b\},\{b,b_1\},\{b_1,a_1\},\{a_1,f_3\}\},  
\] 
\[
 \mathcal{C}(P)=\{\emptyset, \{a_1\}, \{b\}, \{b_1\},\{a_1,b\}\}.
\]
The ring $S/Q_a$ is Cohen-Macaulay of dimension $8$ and it has two components: The path $P$ on $5$ vertices and the isolated vertex $f_2$.  We also observe that $J(G_{\ol{a}})$ is the cone from the vertex $a_1$ to the vertex $f_3$ and the graph obtained attaching the edge $\{b_1, b\}$ to the complete graph whose vertices are $\{a,b,f_1,f_2\}$. Hence $J(G_{\ol{a}})$ is Cohen-Macaulay of dimension $8$, too. Moreover $Q_a+J(G_{\ol{a}})$ is a binomial edge ideal that is equal to the  previous cone removing the vertex $a$, hence it is Cohen-Macaulay of $\depth$ $7$.
By the Depth Lemma applied on the following exact sequence the assertion follows
\[
0\longrightarrow S/J(G)\longrightarrow S/Q_a\oplus S/J(G_{\ol{a}})\longrightarrow S/(Q_a+J(G_{\ol{a}}))\longrightarrow 0.
\]

Cases $(2,2,2)$ and $(2,2,3)$. These two cases are unmixed if and only if we add the edges as in figure \ref{Unmixed-noncactus}. Moreover we already found them in \cite{Ri}, Example 3.2 and 3.3. In that paper by symbolic computation we observed that they are not Cohen-Macaulay. In \cite{BMS} there is an argument for the non Cohen-Macaualyness of the bipartite one, the one on the right of figure \ref{Unmixed-noncactus}.
\begin{figure}[hbt]
\begin{center}
\setlength{\unitlength}{.3cm}
\begin{picture}(28,7)

\put(-1,03.5){$f_1$}
\put(11.5,03.5){$f_2$}

\put(00,03){\circle*{.2}}
\put(00,03){\circle{.5}}

\put(03,03){\circle*{.2}}
\put(06,03){\circle*{.2}}
\put(06,00){\circle*{.2}}
\put(06,06){\circle*{.2}}

\put(09,03){\circle*{.2}}
\put(12,03){\circle*{.2}}

\put(12,03){\circle{.5}}

\put(00,03){\line(1,0){3}}
\put(06,00){\line(0,1){6}}
\put(09,03){\line(1,0){3}}

\put(03,03){\line(1,1){3}}
\put(03,03){\line(1,-1){3}}

\put(09,03){\line(-1,1){3}}
\put(09,03){\line(-1,-1){3}}


\put(15,03.5){$f_1$}

\put(27.5,00.5){$f_2$}

\put(27.5,06.5){$f_3$}

\put(16,03){\circle*{.2}}
\put(16,03){\circle{.5}}

\put(19,03){\circle*{.2}}
\put(22,03){\circle*{.2}}
\put(22,00){\circle*{.2}}
\put(22,06){\circle*{.2}}

\put(25,00){\circle*{.2}}
\put(25,06){\circle*{.2}}

\put(28,00){\circle*{.2}}
\put(28,06){\circle*{.2}}

\put(28,00){\circle{.5}}
\put(28,06){\circle{.5}}

\put(16,03){\line(1,0){3}}
\put(22,00){\line(0,1){6}}
\put(22,00){\line(1,0){6}}
\put(22,06){\line(1,0){6}}

\put(25,00){\line(0,1){6}}

\put(19,03){\line(1,1){3}}
\put(19,03){\line(1,-1){3}}
\end{picture}

\caption{Bicyclic unmixed but non Cohen-Macaulay non-cactus graphs.}\label{Unmixed-noncactus} 
\end{center} 
\end{figure}
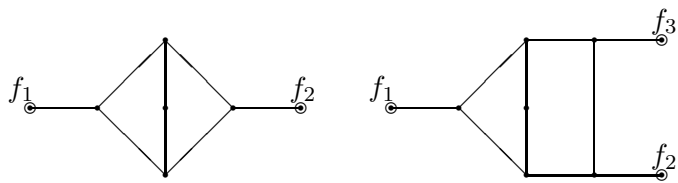

\end{proof}
Now we are ready to give the main result of the section.
\begin{corollary}
 Let $G$ be a bicyclic graph. Then  $J(G)$ is Cohen-Macaulay if and only if $B(G)$ is a tree, $G$ is decomposable into indecomposable graphs $G_1,\ldots, G_r$, and such that one of the following cases occurs:
 \begin{enumerate}
  \item $G_1$ and $G_2$ are in the set of unicyclic graphs in Figure \ref{CM-TreeUni};
  \item $G_1$ is one of the bicyclic cactus graphs in Figure \ref{CM-cactus};
  \item $G_1$ is one of the bicyclic non-cactus graphs in Figure \ref{CM-noncactus}.
 \end{enumerate}
\end{corollary}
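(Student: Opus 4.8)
The plan is to split a connected bicyclic graph $G$ into two mutually exclusive and exhaustive cases and then invoke the classifications already proved. Either both independent cycles of $G$ are blocks, in which case $G$ is a cactus graph, or $G$ has a unique non-trivial block $B_1=P_1\cup P_2\cup P_3$ as in \eqref{eq:B1}, in which case $G$ is a non-cactus bicyclic graph in the sense of Section~\ref{sec:Bicycle}. The implication $(1)\Rightarrow(2)$ is standard, so the real content is $(2)\Leftrightarrow(3)$ together with $(3)\Rightarrow(1)$.

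In the cactus case I would apply Theorem~\ref{Th:DecompositionByUndecomposablePath}: $J(G)$ is Cohen--Macaulay (equivalently, unmixed) if and only if $B(G)$ is a tree and $G$ decomposes into indecomposables $G_1,\dots,G_r$, each with $B(G_i)$ a path and each satisfying the equivalent conditions of Lemma~\ref{lem:indecomposablepathcm}. The key observation to extract from that list is that the acyclic pieces are precisely single edges $K_2$, that the unicyclic pieces are precisely $C_3$ and the cycle $C_4$ carrying two pendant edges at adjacent vertices, and that any piece with at least two cycles contains at least two $C_4$ blocks. Since $G$ has exactly two independent cycles, they must either lie in two distinct indecomposable pieces --- which are then unicyclic, hence coincide with the two unicyclic graphs of Figure~\ref{CM-TreeUni}, yielding case~$(1)$ --- or lie in one indecomposable piece $G_1$, which is then one of the bicyclic indecomposable cactus graphs of Figure~\ref{CM-cactus}, yielding case~$(2)$; in both situations the remaining $G_i$ are edges.

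In the non-cactus case I would simply quote the preceding Lemma: $J(G)$ is Cohen--Macaulay if and only if $B(G)$ is a tree and $G$ decomposes into indecomposables with the piece carrying the block $B_1$ equal to one of the graphs of Figure~\ref{CM-noncactus}, the remaining pieces being edges; this is case~$(3)$. For the reverse implication $(3)\Rightarrow(1)$ in all three cases, Lemma~\ref{The:free}, applied recursively along the decomposition, reduces the Cohen--Macaulayness of $J(G)$ to that of the individual $J(G_i)$, and the pieces in Figures~\ref{CM-TreeUni}, \ref{CM-cactus} and~\ref{CM-noncactus} are Cohen--Macaulay by Lemma~\ref{lem:indecomposablepathcm} and the preceding Lemma, while the binomial edge ideal of an edge is a complete intersection. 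The only step that is not entirely mechanical is the bookkeeping in the previous paragraph --- matching the unicyclic (resp. bicyclic) graphs permitted by Lemma~\ref{lem:indecomposablepathcm} with those drawn in Figure~\ref{CM-TreeUni} (resp. Figure~\ref{CM-cactus}) --- but I expect no essential difficulty there, since it is purely combinatorial and all the homological input has already been supplied by Theorem~\ref{Th:DecompositionByUndecomposablePath} and the preceding Lemma.
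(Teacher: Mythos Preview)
Your proposal is correct and follows exactly the route the paper intends: the paper gives no proof for this corollary, treating it as an immediate consequence of Theorem~\ref{Th:DecompositionByUndecomposablePath} in the cactus case and the preceding Lemma in the non-cactus case, together with the obvious cycle-counting bookkeeping you spell out. Your argument is in fact more detailed than what the paper provides, and the only extra content you supply---identifying which indecomposable pieces from Lemma~\ref{lem:indecomposablepathcm} are unicyclic versus bicyclic and noting that the remaining $G_i$ must be edges---is straightforward and correctly done.
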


\section*{Acknowledgements}
The author was partially supported by GNSAGA of INdAM (Italy).

\end{document}